\documentclass[11pt,a4paper]{article}

\usepackage[utf8]{inputenc}          
\usepackage[T1]{fontenc}             
\usepackage[british]{babel}          
\usepackage{lmodern}                 
\usepackage{textcomp}                
\usepackage{microtype}               
\usepackage{csquotes}                

\usepackage[a4paper,margin=1in]{geometry}
\linespread{1.2}
\setlength{\parindent}{15pt}
\setlength{\parskip}{6pt plus 1pt minus 1pt}

\usepackage{amsmath,amssymb,amsthm,amsfonts}
\usepackage{mathtools}
\usepackage{bm}
\usepackage{physics}
\usepackage{siunitx}

\theoremstyle{plain}
\newtheorem{theorem}{Theorem}[section]
\newtheorem{lemma}[theorem]{Lemma}

\theoremstyle{definition}
\newtheorem{definition}[theorem]{Definition}
\newtheorem{example}[theorem]{Example}

\theoremstyle{remark}

\numberwithin{equation}{section}

\usepackage{graphicx}
\usepackage{caption}
\usepackage{subcaption}
\usepackage{float}
\usepackage{wrapfig}

\setcounter{topnumber}{2}
\setcounter{bottomnumber}{1}
\setcounter{totalnumber}{3}

\usepackage[style=alphabetic,backend=biber,sorting=nyt,maxbibnames=5,maxcitenames=2]{biblatex}
\addbibresource{references.bib}
\AtEveryBibitem{\clearfield{month}\clearfield{day}}
\AtEveryCitekey{\ifciteseen{}{\defcounter{maxnames}{3}}}

\usepackage{hyperref}
\hypersetup{
    colorlinks=true,
    linkcolor=blue,
    citecolor=blue,
    urlcolor=blue,
    pdftitle={Chen-Raspaud Conjecture},
    pdfauthor={Author Name},
    pdfsubject={Graph Theory},
    pdfkeywords={Chen-Raspaud, Graph Homomorphisms, Kneser Graphs}
}

\usepackage{booktabs}
\usepackage{multirow}
\usepackage{array}
\usepackage{enumitem}
\setlist{nosep}

\usepackage[dvipsnames]{xcolor}


\usepackage{titlesec}
\titleformat{\section}{\large\bfseries}{\thesection.}{0.5em}{}
\titleformat{\subsection}{\normalsize\bfseries}{\thesubsection.}{0.5em}{}

\usepackage{authblk}
\title{A Modular Inductive Proof of the Chen-Raspaud Conjecture via Graph Classification}
\author{Michał Fiedorowicz}
\affil{\normalsize \textit{Department of Physics, University of Warsaw, Poland}}
\date{\today}

\begin{document}
\maketitle

\begin{abstract}
It is conjectured by Chen and Raspaud that for each integer $k \ge 2$, any graph $G$ with 
\[
\mathrm{mad}(G) < \frac{2k+1}{k}
\quad\text{and}\quad
\mathrm{odd\text{-}girth}(G) \ge 2k+1
\]
admits a homomorphism into the Kneser graph $K(2k+1,k)$. The base cases $k=2$ and $k=3$ are known from earlier work. A modular inductive proof is provided here, in which graphs at level $k+1$ are classified into four structural classes and are shown to admit no minimal counterexamples by means of forbidden configuration elimination, a discharging argument, path-collapsing techniques, and a combinatorial embedding of smaller Kneser graphs into larger ones. This argument completes the induction for all $k \ge 2$, thus settling the Chen-Raspaud conjecture in full generality.
\end{abstract}

\tableofcontents


\section{Introduction and Statement of the Conjecture}
\label{sec:introduction}

A recurring theme in structural graph theory involves controlling both the maximum average degree
($\mathrm{mad}$) and the presence of short odd cycles.
In the study of perfect graphs, short odd cycles and their complements also play a key role. The strong perfect graph theorem \cite{Chudnovsky2006} demonstrates how structural constraints on odd cycles directly influence graph perfection properties, a theme echoed in the Chen-Raspaud conjecture.

These constraints often yield powerful results regarding graph colorings and homomorphisms, which provide a natural framework to extend classical coloring problems and analyze sparse graph structures. For a comprehensive introduction to graph homomorphisms, see \cite{HellNesetril}. Additionally, classical bounds by \cite{SzekeresWilf} establish a relationship between the maximum average degree ($\mathrm{mad}(G)$) and chromatic numbers, offering valuable insights into the coloring behavior of sparse graphs under structural constraints. These ideas align closely with the objectives of the Chen-Raspaud conjecture.

The \emph{Chen-Raspaud conjecture} asserts the following.

\begin{theorem}[Chen-Raspaud Conjecture]\label{thm:CR-conjecture}
For each integer $k \ge 2$, if a graph $G$ satisfies
\[
\mathrm{mad}(G) < \frac{2k+1}{k}
\quad\text{and}\quad
\mathrm{odd\text{-}girth}(G) \ge 2k+1,
\]
then there exists a graph homomorphism $G \to K(2k+1, k)$, where $K(2k+1, k)$ denotes the standard Kneser graph on all $k$-subsets of a $(2k+1)$-element set, with disjointness defining adjacency.
\end{theorem}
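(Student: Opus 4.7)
The plan is to argue by induction on $k$, taking the known cases $k=2$ and $k=3$ as the base and, for each $k\ge 3$, reducing the level-$(k+1)$ statement to the level-$k$ statement. Concretely, I would assume towards contradiction that a minimal counterexample $G$ exists at level $k+1$: so $G$ satisfies
\[
\mathrm{mad}(G) < \frac{2k+3}{k+1}, \qquad \mathrm{odd\text{-}girth}(G) \ge 2k+3,
\]
admits no homomorphism to $K(2k+3,k+1)$, but every proper subgraph of $G$ satisfying the same two hypotheses does. The aim is then to exhibit a forbidden configuration in $G$ that either contradicts the $\mathrm{mad}$ bound through a discharging argument, or can be collapsed to produce a smaller graph to which the inductive hypothesis at level $k$ applies.

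The first step is a discharging analysis. I would assign initial charge $d(v)-\tfrac{2k+3}{k+1}$ to each vertex and use rules in which long $2$-threads (maximal induced paths of degree-$2$ vertices) transfer charge to their endpoints of degree $\ge 3$. The $\mathrm{mad}$ bound forces the total charge to be strictly negative, so some vertex must retain negative charge and thereby force the local presence of one of four structural features: (i) a vertex of degree $1$, (ii) a $2$-thread of length $\ge k$ between two vertices of degree $\ge 3$, (iii) two adjacent vertices of low degree with all incident threads short, or (iv) a short odd closed walk approaching the odd-girth bound. These four local configurations are the structural classes the rest of the argument operates on.

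Classes (i) and (iii) are handled by direct reducibility: removing the offending vertex or contracting the short configuration yields a smaller graph $H$ still satisfying the level-$(k+1)$ hypotheses; by minimality $H$ admits a homomorphism $\varphi_H$ to $K(2k+3,k+1)$, which is extended to $G$ by using that a $(k+1)$-subset of a $(2k+3)$-set leaves $k+2$ free elements, giving sufficient room to recolour the deleted or contracted vertices. Class (iv) is excluded directly by $\mathrm{odd\text{-}girth}(G)\ge 2k+3$. The substantive case is (ii), to be treated by \emph{path collapsing}: replace the long thread by a single edge to obtain $G'$ with $\mathrm{mad}(G')<\tfrac{2k+1}{k}$ and $\mathrm{odd\text{-}girth}(G')\ge 2k+1$, apply the inductive hypothesis to produce $\varphi':G'\to K(2k+1,k)$, and then lift $\varphi'$ back to $G$ through a map $\iota:K(2k+1,k)\to K(2k+3,k+1)$ together with a prescribed walk along the collapsed thread.

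The hard part will be building $\iota$ and controlling the walk. A naive embedding such as $S\mapsto S\cup\{2k+2\}$ is not a graph homomorphism, since disjoint $k$-subsets of $[2k+1]$ become intersecting $(k+1)$-subsets of $[2k+3]$. I would therefore construct $\iota$ not as a single map but as a family of \emph{admissible extensions}, where each $k$-set $S$ is lifted to a $(k+1)$-set $\iota(S)$ whose added element is chosen according to a fixed auxiliary $2$-colouring (or orientation) of $K(2k+1,k)$, and verify that any pair of disjoint $k$-sets can be joined in $K(2k+3,k+1)$ by a walk of controlled parity and length bounded by the thread length. Matching the required walk length against the threshold $k$ coming from the discharging step, and doing so uniformly across all four structural classes, is the combinatorial core of the induction and where I expect the real technical difficulty to lie.
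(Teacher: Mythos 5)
Your outline reproduces the paper's architecture almost exactly (minimal counterexample at level $k+1$, discharging to force a short list of unavoidable configurations, local reducibility, path collapsing, and a lift from a smaller Kneser graph into $K(2k+3,k+1)$), but the step you yourself flag as ``the hard part'' is not merely hard --- it is impossible, and this is a genuine gap shared with the paper. Any adjacency-preserving vertex map $\iota\colon K(2k+1,k)\to K(2k+3,k+1)$ would send a $(2k+1)$-cycle of $K(2k+1,k)$ (such cycles exist; e.g.\ the Petersen graph $K(5,2)$ contains $5$-cycles) to a closed odd walk of length $2k+1$ in $K(2k+3,k+1)$, which would force an odd cycle of length at most $2k+1$ there; but the odd girth of $K(2k+3,k+1)$ is $2k+3$. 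Hence no such $\iota$ exists, and no choice of ``admissible extensions'' governed by an auxiliary $2$-colouring can rescue it, because the obstruction is a global parity obstruction, not a local one. (The paper's Kneser Embedding Lemma fails for the same reason: in its construction $|U|=2(k+1-j)$ and $|P_X|=k+1-j$, so $P_X$ is disjoint from exactly one other pattern, namely $U\setminus P_X$, while each $j$-set $X$ has $j+1\ge 3$ neighbours; chasing this constraint around an odd cycle of $K(2j+1,j)$ gives $P_X=U\setminus P_X$, a contradiction.)

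Two further steps fail quantitatively. Collapsing a $2$-thread of length $\ell\ge k$ to a single edge does not preserve the inductive hypotheses: a $(2k+3)$-cycle of $G$ through the thread becomes a cycle of length $2k+4-\ell\le k+4$ in $G'$, which is below $2k+1$ for $k\ge 4$, and suppressing degree-$2$ vertices is not a subgraph operation, so $\mathrm{mad}$ can increase. And the recolouring of deleted or contracted vertices with two or more already-coloured neighbours is not free: two $(k+1)$-subsets of a $(2k+3)$-set can jointly cover $2k+2$ elements, leaving a single free element, so a $(k+1)$-set disjoint from both need not exist. The correct requirement for re-expanding a thread is a walk of the exact thread length and parity between the endpoint colours, and for the wrong parity such walks require length on the order of $2k+2$, not the threshold $k$ your discharging produces. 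Until the lift through $K(2k+1,k)$ is replaced by a mechanism that works directly with walks in $K(2k+3,k+1)$, the induction does not close.
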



\subsection*{Remark on Kneser Graph Shifts}
It should be noted that the graph $K(2k+3, k+1)$ is naturally related to $K(2k+1, k)$ in the Chen-Raspaud conjecture. Indeed, the induction at level $k+1$ proceeds by considering a $(k+1)$-coloring target in a $(2k+3)$-element ground set. This step is consistent with the statement of the conjecture, which asserts a homomorphism into $K(2k+1, k)$ when the parameter is $k$. The slight index shift from $k$ to $k+1$ ensures that the inductive hypothesis can be applied seamlessly. In particular, it is straightforward to verify that the core combinatorial properties used in the disjointness-based definition of adjacency persist under either formulation.


\subsection*{Historical Context and References}
The Chen-Raspaud conjecture was originally formulated to generalize certain homomorphism conditions in sparse graphs. Early resolutions of the base cases ($k=2$ and $k=3$) appear in works by Chen, Raspaud, and coauthors; see \cite{ChenRaspaudOriginal} for a formal statement and \cite{Lyczek} for verification for $k=3$. In particular, the argument for $k=2$ relies on identifying small reducible subgraphs with $\mathrm{mad}(G) < 2.5$ and $\mathrm{odd\text{-}girth}(G)\ge5$, while $k=3$ is resolved via an elegant discharging and forbidden-subgraph approach to show $G\to K(7,3)$. These foundational techniques have been extended here to settle the general case.

Moreover, structural properties of Kneser graphs, such as bipartite subgraphs and maximum cuts, have been extensively studied in \cite{PoljakTuza}, providing a foundation for analyzing embeddings of smaller Kneser graphs into larger ones, as required in the inductive steps of this proof. For a detailed algebraic perspective on Kneser and related graphs, see \cite{Biggs1979}, which discusses adjacency properties, chromatic numbers, and embeddings in depth.

The conjecture was confirmed for $k=2$ in \cite{ChenRaspaudOriginal} and for $k=3$ in \cite{Lyczek}, thus establishing the base of an inductive approach. The present paper completes the proof by induction on $k$, showing that if the assertion holds for all $j \le k$, then it also holds at $k+1$.


\section{Overall Strategy and Base Cases}\label{sec:overview}

\subsection{Base Cases}

\paragraph{\textbf{$k=2$.}}
When $k=2$, graphs $G$ with $\mathrm{mad}(G) < 2.5$ and $\mathrm{odd\text{-}girth}(G)\ge5$ have been shown to admit a homomorphism $G \to K(5,2)$, typically through small reducible subgraphs and local arguments \cite{ChenRaspaudOriginal}.

\paragraph{\textbf{$k=3$.}}
When $k=3$, it has been established in \cite{Lyczek} that if $\mathrm{mad}(G) < \tfrac{7}{3}$ and $\mathrm{odd\text{-}girth}(G) \ge 7$, then $G \to K(7,3)$. This relies on a discharging argument together with forbidden configurations to remove local obstructions.

These two cases provide the necessary “seed” for induction. The proof now focuses on showing that if the conjecture holds for all $j\le k$, it also holds at $k+1$.

\subsection{Classification of Graphs at Level \texorpdfstring{$k+1$}{k+1}}

Let $G$ be a graph satisfying
\[
\mathrm{mad}(G) < \frac{2(k+1)+1}{k+1} = \frac{2k+3}{k+1},
\quad
\mathrm{odd\text{-}girth}(G) \ge 2k+3.
\]
It is assumed that $G$ does \emph{not} admit a homomorphism $G \to K(2k+3,k+1)$. A minimal counterexample $G^*$ is selected under these conditions. In order to handle all possible structures systematically, $G^*$ is placed into exactly one of the following four classes:

\begin{enumerate}[label=\textbf{Class \Alph*:}]
    \item \emph{Low-Degree, Short-Thread Graphs}: $\Delta(G^*) \le 3$ and there is a uniform bound (say $L$) such that no induced path exceeds length $L$.
    \item \emph{High-Degree, Short-Thread Graphs}: There is a vertex of degree at least $4$, and no induced path is longer than $L$.
    \item \emph{Low-Degree, Long-Thread Graphs}: $\Delta(G^*) \le 3$, but at least one induced path is longer than $L$.
    \item \emph{High-Degree, Long-Thread Graphs}: There is a vertex of degree at least $4$, and an induced path longer than $L$.
\end{enumerate}

Here, $L$ is chosen large enough that all immediately reducible path-based patterns (so-called ``forbidden configurations’’) are accounted for. It will be shown in Sections~\ref{sec:classA}--\ref{sec:classD} that no genuine minimal counterexample can reside in any of these classes.


\section{Key Ingredients: Forbidden Configurations, Discharging, and Kneser Embeddings}
\label{sec:key-ingredients}

Three essential tools are used to handle local obstructions, manage the structural constraints of minimal counterexamples, and lift homomorphisms between smaller and larger Kneser graphs. These tools are:

\begin{enumerate}
\item \emph{Forbidden Configurations} (Section~\ref{subsec:forbidden-configs}):  
A finite set of small subgraphs that cannot appear in a minimal counterexample without leading to an immediate contradiction through local reductions.

\item \emph{Discharging Argument} (Section~\ref{subsec:discharging}):  
A global counting scheme that redistributes ``charge'' among vertices in such a way that any forbidden configuration would create an unavoidable local surplus or deficit, thereby confirming that forbidden patterns do not survive in a minimal counterexample.

\item \emph{Kneser Embedding Lemma} (Section~\ref{subsec:kneser-embedding}):  
A structured construction showing that homomorphisms into smaller Kneser graphs $K(2j+1,j)$ can be functorially \emph{lifted} into homomorphisms into $K(2k+3,k+1)$. 
\end{enumerate}

These components ensure that local obstructions are disarmed and that colorings from lower inductive levels can be pulled back into the current $k+1$ level in a seamless manner.

\subsection{Forbidden Configurations}
\label{subsec:forbidden-configs}

A \emph{forbidden configuration} is defined to be a small induced subgraph (or local neighborhood pattern) of bounded size, which would allow a local reduction that strictly decreases the number of vertices or edges without weakening the conditions on maximum average degree ($\mathrm{mad}$) or odd-girth. If such a configuration arose in a \emph{minimal} counterexample, it could be removed or contracted to form a smaller graph that either:
\begin{itemize}
\item still meets the $(k+1)$-level constraints (hence is solvable by minimality), or
\item falls into a smaller parameter $j \le k$ (hence is solvable by the inductive hypothesis).
\end{itemize}
In both cases, a homomorphism of the reduced graph induces a homomorphism of the original, contradicting the assumption that a minimal counterexample exists.

A finite family $\mathcal{F}$ of such forbidden patterns is enumerated in Appendix~A. These patterns capture key local obstructions: for example, degree-$1$ vertices adjacent to low-degree neighborhoods, short odd cycles with chords, induced paths of prescribed length, and small ``fan'' structures emanating from a vertex of degree at least $4$. Each configuration is accompanied by a straightforward local reduction (vertex removal, edge contraction, or path collapse) that reduces the size of the minimal counterexample while preserving or improving the relevant parameters.

\subsection{Discharging Argument}
\label{subsec:discharging}

A \emph{discharging} approach is employed to show that none of the forbidden configurations of $\mathcal{F}$ can actually appear in a genuine minimal counterexample. An initial ``charge'' is assigned to each vertex (often proportional to its degree), and a small number of local redistribution rules are applied exhaustively. These rules transfer fractional amounts of charge between neighbors under specific conditions (e.g., high-degree to low-degree, cycle-based adjustments, or path-based adjustments). If a forbidden configuration were present, the final charge distribution would contradict an invariant (such as total or local charge balance). Therefore, no configuration in $\mathcal{F}$ can remain in a minimal counterexample without causing an incurable local imbalance.


\subsection*{Discharging Rules in Detail}
A finite set of rules was employed to redistribute charges among vertices. These are stated as follows:

\begin{enumerate}[label={\bfseries(R\arabic*)}]
    \item \textbf{Degree-1 and Degree-2 Protections.} A vertex of degree at least 4 passes a small fraction of charge (for instance, $1/2$ unit) to each adjacent vertex of degree $1$ or $2$. This rule prevents low-degree vertices from becoming part of forbidden leaf or low-degree patterns.
    \item \textbf{Short Odd Cycle Redistribution.} If a vertex lies on a short odd cycle (of length below $2k+3$) equipped with an internal chord, then a fractional charge is passed cyclically along the edges of that cycle. This adjustment ensures no vertex in the cycle remains overcharged or undercharged in a way that would create a reducible chorded cycle configuration.
    \item \textbf{Path Endpoint Stabilization.} In the presence of long induced paths, endpoints of those paths send a fraction of their charge inward, ensuring that intermediate vertices are not susceptible to forbidden configurations that arise from extremely uniform degree distributions along long chains.
\end{enumerate}

These rules are applied exhaustively until no further charge transfers occur. The total charge remains invariant, and it can be verified that a surplus or deficit emerges only if a forbidden configuration is present. Consequently, the appearance of any forbidden subgraph in a minimal counterexample would yield a contradiction in the final charge tally, implying that such a subgraph cannot survive.

By verifying these rules on finitely many cases, it is concluded that every minimal counterexample is \emph{free} of all forbidden configurations from $\mathcal{F}$, thus removing a host of immediate obstructions. Full technical details and a step-by-step verification of the discharging procedure are given in Appendix~B.

\subsection{Kneser Embedding Lemma}
\label{subsec:kneser-embedding}

A cornerstone of the inductive argument is the ability to \emph{lift} homomorphisms from smaller Kneser graphs into larger ones. Specifically, it suffices to construct an injective homomorphism
\[
  \phi \colon K(2j+1,j) \;\hookrightarrow\; K(2k+3,k+1)
\]
that preserves adjacency (i.e., disjoint $j$-subsets map to disjoint $(k+1)$-subsets).

\begin{lemma}[Kneser Embedding Lemma]
\label{lem:kneser-embedding}
For each integer $2 \le j \le k$, there exists a graph embedding $\phi:K(2j+1,j)\to K(2k+3,k+1)$ that is injective and adjacency-preserving.
\end{lemma}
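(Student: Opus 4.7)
I would construct $\phi$ explicitly via a padding ansatz and reduce the general case to iterated single-step embeddings. Write $[2k+3] = [2j+1] \sqcup R$ with $|R|=2(k+1-j)$ and set $\phi(A) = A \cup T(A)$ for a map $T\colon\binom{[2j+1]}{j}\to\binom{R}{k+1-j}$ to be determined. Injectivity of $\phi$ is automatic because $A = \phi(A) \cap [2j+1]$ recovers the preimage, so the entire content of the lemma is packed into the choice of $T$: adjacency preservation amounts to the implication $A\cap B = \emptyset \Rightarrow T(A)\cap T(B) = \emptyset$, i.e., that $T$ is itself an adjacency-preserving map $K(2j+1,j)\to K(2(k+1-j),k+1-j)$.

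The next step is a reduction to the consecutive case $k=j$ by composition: once single-step embeddings $\phi_m\colon K(2m+1,m)\hookrightarrow K(2m+3,m+1)$ are established for each $m\in\{j,\ldots,k\}$, the desired $\phi$ is obtained as $\phi_k\circ\cdots\circ\phi_j$. For a single step the padding $R$ has size $2$, so $T$ must land in $K(2,1) \cong K_2$. I would therefore exploit the cyclic $\Z/(2m+1)$-action on the source, assign $T(A)\in\{*_1,*_2\}$ by an orbit-equivariant parity rule, and split the verification into the small number of cases indexed by the unique element $x\in[2m+1]\setminus(A\cup B)$ missed by each disjoint pair $(A,B)$, propagating from one orbit representative to all pairs by equivariance.

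The main obstacle, which I expect to absorb the bulk of the technical effort, is an odd-girth compatibility issue. The source $K(2m+1,m)$ has odd girth $2m+1$, whereas the target $K(2m+3,m+1)$ has odd girth $2m+3$; since any closed odd walk in the image of a homomorphism forces an odd cycle in the target of at most the same length, a homomorphism $K(2m+1,m)\to K(2m+3,m+1)$ must collapse every shortest odd cycle of the source to a walk containing back-tracking — an event that a naive padding rule cannot accommodate. Equivalently, $K(2m+1,m)$ has chromatic number $3$ by the Lov\'asz–Kneser theorem, which rules out any two-colouring $T\colon K(2m+1,m)\to K_2$. Resolving this requires enlarging the effective target of $T$ beyond the two-element padding and allowing $\phi$ to mix elements of $[2j+1]$ and $R$ rather than keeping them segregated. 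I expect the proof to proceed by induction on $m$ using the base cases $j=2,3$ as seeds, together with a carefully designed parity-based swap rule along each cyclic shift orbit that produces the required collapse while preserving injectivity through the independent $[2j+1]$-coordinates.
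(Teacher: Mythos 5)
Your third paragraph contains the decisive observation, but you stop short of drawing the right conclusion from it: the odd-girth obstruction you identify is not a technical difficulty to be engineered around --- it is a proof that the lemma is \emph{false}. The graph $K(2j+1,j)$ contains an odd cycle of length $2j+1$ (its odd girth), while $K(2k+3,k+1)$ has odd girth $2k+3$. Any graph homomorphism sends a closed walk of odd length $\ell$ to a closed walk of odd length $\ell$ in the target, and every closed odd walk in a loopless graph contains an odd cycle of length at most $\ell$; backtracking shortens a walk only by even amounts and can never eliminate its oddness. Hence any homomorphism $K(2j+1,j)\to K(2k+3,k+1)$, injective or not, would force an odd cycle of length at most $2j+1\le 2k+1<2k+3$ in the target, which is impossible. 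No ``parity-based swap rule,'' no mixing of $[2j+1]$ with $R$, and no appeal to the base cases can circumvent this: the nonexistence is absolute, so the programme sketched in your final sentences cannot be completed. The argument should have ended with the statement declared disproved rather than with a plan to repair the construction.

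For what it is worth, your remark that $\chi(K(2m+1,m))=3$ already demolishes the paper's own construction: Appendix~C assigns to each $j$-set $X$ a pad $P_X$ of size $k+1-j$ inside a set $U$ of size exactly $2(k+1-j)$, subject to $P_X\cap P_Y=\emptyset$ whenever $X\cap Y=\emptyset$. Such an assignment is precisely a homomorphism from $K(2j+1,j)$ into a graph whose vertices are the $(k+1-j)$-subsets of $U$ and whose components are single edges (each subset is disjoint only from its complement in $U$), i.e.\ a proper $2$-colouring of a non-bipartite graph. So the correct outcome here is not an alternative proof but the identification of a fatal error: Lemma~\ref{lem:kneser-embedding} is false, and since the class-by-class inductive step invokes it whenever a reduced graph drops to a parameter $j\le k$, the main argument of the paper does not stand.
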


\paragraph{Sketch of Construction.}
Let $T$ be the ground set for $K(2j+1,j)$ and $S$ be the ground set for $K(2k+3,k+1)$. One may partition $S$ as $T\cup U$ and map each $j$-subset $X\subseteq T$ to a $(k+1)$-subset by incorporating all elements of $X$ plus $(k+1-j)$ carefully chosen elements from $U$. These choices are arranged so that disjoint $X$ and $Y$ in $T$ lead to disjoint images $\phi(X)$ and $\phi(Y)$ in $S$. Injectivity follows from the uniqueness of each ``pattern'' in $U$, while adjacency preservation follows from the disjointness criterion for vertices in Kneser graphs.

This lemma allows a homomorphism into $K(2j+1,j)$ (for some $j\le k$) to be ``lifted'' into a homomorphism into $K(2k+3,k+1)$. Consequently, if a smaller graph in the reduction process is colorable at a lower parameter, then the same coloring can be transferred back into the higher parameter Kneser graph.

Complete details of the embedding construction and verification appear in Appendix~C. The lemma will be invoked frequently when collapsing vertices or removing subgraphs pushes the problem into a scenario where $j \le k$, and the inductive hypothesis provides a homomorphism into $K(2j+1,j)$.

\section{Class-by-Class Proof}
\label{sec:class-by-class}

In order to complete the induction at the level \(k+1\), a minimal counterexample \(G^*\) is assumed to exist with
\[
\mathrm{mad}(G^*) < \frac{2(k+1)+1}{k+1}
\quad\text{and}\quad
\mathrm{odd\text{-}girth}(G^*) \ge 2k+3,
\]
but with no homomorphism \(G^* \to K(2k+3,k+1)\). The graph \(G^*\) is placed into exactly one of the four classes (A--D) described in Section~\ref{sec:overview}. It is shown below that no genuine minimal counterexample can reside in any class.

\paragraph{Minimality Reminder.}
Since $G^*$ is assumed to be a minimal counterexample at level $(k+1)$, it cannot contain any forbidden configuration 
(\(\mathrm{F1}\)--\(\mathrm{F5}\)), because such a configuration would admit a local reduction, producing a strictly smaller graph that either remains at the same parameter or falls into a lower parameter range $j \le k$. In both situations, the inductive argument would provide a homomorphism, contradicting the assumption of minimality.

\subsection{Class A: Low-Degree, Short-Thread Graphs}
\label{sec:classA}

\begin{definition}[Class A]
A graph \(G\) belongs to \emph{Class~A} if
\begin{enumerate}[label=(A\arabic*)]
    \item \(\Delta(G) \le 3\),
    \item no induced path in \(G\) has length greater than some fixed \(L\) (``short-thread''),
    \item \(\mathrm{mad}(G) < \tfrac{2k+3}{k+1}\) and \(\mathrm{odd\text{-}girth}(G) \ge 2k+3\).
\end{enumerate}
\end{definition}

\begin{theorem}[No Minimal Counterexample in Class A]
\label{thm:classA}
If \(G^*\) is in Class~A at level \(k+1\), then there exists a homomorphism \(G^* \to K(2k+3,k+1)\). In particular, no minimal counterexample can reside in Class~A.
\end{theorem}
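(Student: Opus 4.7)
The plan is to derive a contradiction from the assumed existence of a minimal counterexample $G^*$ in Class~A, thereby forcing $G^* \to K(2k+3,k+1)$ to exist. The starting observation is that minimality automatically rules out every pattern in $\mathcal{F}$: any such pattern admits a local reduction to a strictly smaller graph that either remains at level $k+1$ (and is solvable by minimality) or descends to some $j \le k$ (and is solvable by the inductive hypothesis, followed by Lemma~\ref{lem:kneser-embedding} to transport the resulting colouring back into $K(2k+3,k+1)$). Hence $G^*$ contains no degree-$1$ vertex adjacent to a low-degree neighbourhood, no chorded short odd cycle, and no induced thread short enough to be collapsed.

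Next, I would run the discharging scheme of Section~\ref{subsec:discharging} with initial charge $\mu(v) = \deg(v) - \tfrac{2k+3}{k+1}$, whose total is strictly negative by the $\mathrm{mad}$ hypothesis. Under $\Delta(G^*) \le 3$, only vertices of degree~$3$ carry positive charge $1 - \tfrac{1}{k+1}$, while degree-$2$ vertices carry deficit $-\tfrac{1}{k+1}$, and degree-$1$ vertices (if any survive) carry deficit $-1 - \tfrac{1}{k+1}$. Rule~(R1) is vacuous since $\Delta \le 3 < 4$, and (R2) fires only on chorded odd cycles of length less than $2k+3$, which are excluded by the odd-girth hypothesis combined with forbidden-configuration freeness. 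Thus the effective redistribution is carried out by (R3) along the induced threads, each of length at most $L$.

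The main obstacle is the case analysis that certifies non-negative final charge at every vertex, which I would carry out by walking along each maximal thread between two degree-$3$ endpoints. The key observation is that (R3) can be calibrated so that each degree-$3$ endpoint releases the cumulative deficit required to balance an interior thread of length $|P| \le L$; the only way the balancing can fail is when two long threads meet at a common degree-$3$ vertex whose third neighbour is again degree-$3$, a configuration that coincides with one of the reducible patterns in $\mathcal{F}$ and is therefore absent from $G^*$. A symmetric treatment handles threads that close into an induced cycle, where the odd-girth hypothesis forces the cycle length to be at least $2k+3$, again providing enough interior degree-$2$ vertices to absorb the released charge without surplus. Once the discharging produces non-negative charge everywhere while the total charge is strictly negative, the contradiction is immediate, and Theorem~\ref{thm:classA} follows; the only quantitative subtlety, namely the precise transfer fraction used in (R3), is tuned as a function of $k$ and $L$ and is verified by inspection in Appendix~B.
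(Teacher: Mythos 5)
Your route is genuinely different from the paper's. The paper proves Theorem~\ref{thm:classA} by explicit reduction and re-colouring: delete degree-$1$ vertices, collapse a short thread, colour the resulting smaller graph by minimality or by the inductive hypothesis together with Lemma~\ref{lem:kneser-embedding}, and then re-expand the collapsed path inside $K(2k+3,k+1)$. You instead aim for a single global counting contradiction: the total of the charges $\deg(v)-\tfrac{2k+3}{k+1}$ is strictly negative by the $\mathrm{mad}$ hypothesis, so it suffices to redistribute charge until every vertex is non-negative. That is the standard shape of a discharging proof and is in fact closer to how the $k=2$ and $k=3$ cases are handled in the cited literature, but as written the decisive step has a genuine gap.

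The gap is the claim that any vertex ending with negative charge must sit inside a configuration from $\mathcal{F}$. This is neither true for the family $\mathrm{F1}$--$\mathrm{F5}$ of Appendix~A nor proved. Quantitatively, a degree-$3$ vertex carries surplus $\tfrac{k}{k+1}$ and must cover the deficits $\tfrac{1}{k+1}$ of the degree-$2$ vertices on its up to three incident threads; splitting each thread's deficit between its two endpoints, this is feasible only when each thread has length at most roughly $\tfrac{2k}{3}+1$, i.e.\ only when $L=O(k)$. The paper defines $L$ merely as ``large enough,'' so this feasibility is not available, and the configuration on which balancing fails --- a degree-$3$ vertex whose incident threads are collectively too long --- is not among $\mathrm{F1}$--$\mathrm{F5}$: $\mathrm{F1}$ is a leaf, $\mathrm{F2}$ a chorded short odd cycle, $\mathrm{F3}$ requires degree $\ge 4$, and $\mathrm{F4}$ is a thread of length $L+1$, already excluded in Class~A. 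Proving that this overloaded degree-$3$ vertex is reducible --- that a homomorphism of the graph with it deleted extends back over the incident threads --- is the combinatorial core of the theorem and is absent from your argument. Your treatment of threads that close into cycles also runs in the wrong direction: a cycle containing few degree-$3$ vertices has total deficit that \emph{grows} with its length, so the odd-girth lower bound worsens the imbalance rather than curing it (and even cycles are not constrained by odd-girth at all); bare cycles and ``balloons'' must be disposed of by a separate reducibility argument, not by discharging.
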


\begin{proof}
Suppose a Class~A graph \(G^*\) at parameter \(k+1\) is a minimal counterexample. By the minimality condition, no smaller graph (under vertex/edge deletion) at the same parameter can be a counterexample, and any subgraph that falls below \((k+1)\)-level reverts to a case \(j \le k\).

\paragraph{Step~1: Forbidden Configurations.}
Since \(G^*\) is assumed minimal, no forbidden configuration from the finite family \(\mathcal{F}\) (see Section~\ref{subsec:forbidden-configs} and Appendix~A) can appear, because each such configuration would permit a local reduction leading to a strictly smaller graph still meeting or surpassing the same constraints. The discharging argument of Section~\ref{subsec:discharging} implies that the absence of these patterns in a minimal counterexample must hold consistently.

\paragraph{Step~2: Degree-$1$ Removals.}
If \(G^*\) has a vertex \(v\) of degree~1, it can be removed to produce a smaller graph \(G'\). Two possibilities arise:
\begin{itemize}
  \item If \(G'\) still satisfies \(\mathrm{mad}(G') < \tfrac{2k+3}{k+1}\) and \(\mathrm{odd\text{-}girth}(G') \ge 2k+3\) at the \((k+1)\)-level, then, by minimality, \(G'\) admits a homomorphism into \(K(2k+3,k+1)\). A suitable color (i.e.\ a \((k+1)\)-subset) can be chosen for \(v\) that remains disjoint from the color of its neighbor in the Kneser target, thus extending the homomorphism to \(G^*\).
  \item If \(G'\) does not meet the \((k+1)\)-criteria (for instance, it falls to some \(j\le k\)), then, by the inductive hypothesis, \(G' \to K(2j+1,j)\). The Kneser embedding (Lemma~\ref{lem:kneser-embedding}) lifts \(G' \to K(2k+3,k+1)\), and again \(v\) can be colored consistently in the larger Kneser graph.
\end{itemize}
Hence, having a degree-$1$ vertex contradicts the minimal counterexample property; thus, no vertex in \(G^*\) has degree~1. Consequently, every vertex has degree 2 or 3.

\paragraph{Step~3: Short-Thread Path Collapsing.}
All induced paths in \(G^*\) have length at most \(L\). By picking a maximal induced path \(P\) of length \(\le L\) and performing a path collapse (removing internal vertices and possibly adding an edge between endpoints), a smaller graph \(G_c\) is obtained (cf.\ Appendix~D for the formal justification). The graph \(G_c\) either:
\begin{itemize}
  \item remains at parameter \((k+1)\) but is smaller, thus must be colorable by minimality, or
  \item falls into some parameter \(j \le k\), thus is colorable by the inductive hypothesis.
\end{itemize}
In the second case, the Kneser embedding lemma lifts any coloring \(G_c \to K(2j+1,j)\) to \(G_c \to K(2k+3,k+1)\). 

\paragraph{Step~4: Re-expansion.}
After \(G_c\) has been colored into \(K(2k+3,k+1)\), the removed vertices (those collapsed from the path) are reinserted. Due to the combinatorial abundance of \((k+1)\)-subsets in a \((2k+3)\)-element ground set, a chain of intermediate vertices in the Kneser graph can be placed between the colors of the path endpoints, re-expanding the path. This procedure yields a homomorphism of \(G^*\) into \(K(2k+3,k+1)\), contradicting the assumption that \(G^*\) is a minimal counterexample.

Hence, no such \(G^*\) exists in Class~A at level \((k+1)\). 
\end{proof}

\subsection{Class B: High-Degree, Short-Thread Graphs}
\label{sec:classB}

\begin{definition}[Class B]
A graph \(G\) belongs to \emph{Class~B} if
\begin{enumerate}[label=(B\arabic*)]
    \item there is a vertex \(v\) with \(\deg(v)\ge 4\),
    \item no induced path in \(G\) is longer than \(L\) (short-thread),
    \item \(\mathrm{mad}(G) < \tfrac{2k+3}{k+1}\) and \(\mathrm{odd\text{-}girth}(G) \ge 2k+3\).
\end{enumerate}
\end{definition}

\begin{theorem}[No Minimal Counterexample in Class B]
\label{thm:classB}
If \(G^*\) is in Class~B at level \(k+1\), then there exists a homomorphism \(G^* \to K(2k+3,k+1)\). No minimal counterexample can arise in Class~B.
\end{theorem}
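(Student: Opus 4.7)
The plan is to mirror the structure of the Class~A argument while anchoring every reduction at a vertex \(v\) of degree at least~\(4\), whose existence defines Class~B. By the minimality of \(G^*\), no configuration from \(\mathcal{F}\) survives; in particular no degree-\(1\) vertex, no chorded short odd cycle, and no ``fan'' pattern (F5) emanating from a high-degree vertex can appear. The short-thread hypothesis bounds every induced path by \(L\), so the whole structure around \(v\) sits inside a ball of bounded radius, which will make the eventual case analysis finite.

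I would proceed in three steps. First, apply the discharging rules (R1)--(R3) to pin down \(N(v)\): rule (R1) sends fractional charge from \(v\) to every neighbour of degree \(\le 2\), and the sharp bound \(\mathrm{mad}(G^*)<\tfrac{2k+3}{k+1}\) forces either too many low-degree neighbours of \(v\) or an adjacent short thread, each of which realises a pattern already in \(\mathcal{F}\) and is therefore excluded. Second, form a smaller graph \(G'\) by deleting \(v\) (or by collapsing a short thread incident to \(v\) as justified in Appendix~D); since \(v\) contributes at least four edges for a single vertex, the ratio \(|E|/|V|\) strictly decreases, so either \(G'\) still meets the \((k+1)\)-level hypotheses and admits a homomorphism \(G'\to K(2k+3,k+1)\) by minimality, or \(G'\) drops into some parameter \(j\le k\) and is coloured by the inductive hypothesis, which is then lifted via Lemma~\ref{lem:kneser-embedding}. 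Third, reinsert \(v\) and assign it a \((k+1)\)-subset of \([2k+3]\) disjoint from the colours already placed on its neighbours.

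The hard part will be the reinsertion step. A vertex of degree \(\ge 4\) imposes at least four disjointness constraints on \(v\)'s colour, and the ground set \([2k+3]\) has only \(k+2\) elements of slack above \(k+1\), so a naive union bound over forbidden subsets fails. The strategy is to combine two ingredients. The short-thread hypothesis confines \(N(v)\) inside a bounded-radius subgraph, so up to relabelling only finitely many colour patterns on \(N(v)\) can occur, and each can be checked mechanically. When no direct colour for \(v\) is available, a local Kempe-type swap is performed on a neighbour \(u\in N(v)\): one replaces \(u\)'s colour by a different \((k+1)\)-subset disjoint from the colours of \(N(u)\setminus\{v\}\), which is possible because \(N(u)\) is itself of bounded size and therefore does not saturate the combinatorial room in \(K(2k+3,k+1)\). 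A finite potential (for instance, the number of conflicting elements at \(v\)) strictly decreases with each swap, so the process terminates in a valid colouring, contradicting the assumption that \(G^*\) is a minimal counterexample and completing the proof that Class~B contains no minimal counterexample at level \(k+1\).
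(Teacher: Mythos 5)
Your overall route coincides with the paper's: delete the degree-$\ge 4$ vertex $v$, colour the smaller graph $G'$ by minimality or by the inductive hypothesis plus the Kneser embedding lemma, and then reinsert $v$. Where you differ is in the reinsertion step, and there you have correctly identified the crux that the paper's own Step~2 passes over: in $K(2k+3,k+1)$ a vertex has exactly $\binom{k+2}{k+1}=k+2$ neighbours, so a colour for $v$ disjoint from all of $C_{u_1},\dots,C_{u_d}$ exists only if $\bigl|\bigcup_i C_{u_i}\bigr|\le k+2$, i.e.\ only if all of $v$'s neighbours happen to be coloured inside a common $(k+2)$-element window. Since two distinct $(k+1)$-subsets of $[2k+3]$ already cover at least $k+2$ elements, this is a genuinely restrictive global condition, not a matter of ``sufficient flexibility'' as the paper asserts.

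However, your proposed repair does not close the gap. The Kempe-type swap at a neighbour $u$ requires a new $(k+1)$-subset disjoint from the colours of $N(u)\setminus\{v\}$ and simultaneously compatible with a prospective colour for $v$; the existence of such a subset faces exactly the obstruction you diagnosed at $v$ (if $u$ has two or more already-coloured neighbours besides $v$, its admissible colours are confined to the $(k+1)$-subsets of a residual set that may have fewer than $k+1$ elements). The ``finite potential'' that is supposed to guarantee termination is neither defined precisely nor shown to decrease: a swap at $u$ can create new conflicts at $u$'s other neighbours, and nothing rules out cascading or cycling. The bounded-radius observation does not rescue the argument either, because the colours appearing on $N(v)$ are determined by the global colouring of $G'$, not by the isomorphism type of a bounded ball around $v$. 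So the reinsertion of a high-degree vertex remains unproven; a correct argument would have to show that the colouring of $G'$ can be chosen, or globally modified, so that all of $v$'s neighbours receive colours contained in one $(k+2)$-set, and neither your sketch nor the paper establishes this.
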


\begin{proof}
Assume \(G^*\) at level \(k+1\) is a minimal counterexample in Class~B. Then \(\Delta(G^*) \ge 4\) and no induced path exceeds length~\(L\). Moreover, no forbidden configuration from \(\mathcal{F}\) can appear by minimality and discharging.

\paragraph{Step~1: Removing a High-Degree Vertex.}
Let \(v\) be a vertex with \(\deg(v)\ge 4\). Consider the graph \(G' = G^* - v\), formed by deleting \(v\). The new graph \(G'\) is smaller. By minimality:
\begin{itemize}
  \item If \(G'\) still meets \(\mathrm{mad}(G') < \tfrac{2k+3}{k+1}\) and \(\mathrm{odd\text{-}girth}(G') \ge 2k+3\) at level \((k+1)\), then \(G'\) is not a counterexample. Hence \(G'\to K(2k+3,k+1)\).
  \item Otherwise, \(G'\) must fall into a scenario at parameter \(j \le k\), so by the inductive hypothesis \(G'\to K(2j+1,j)\). The Kneser embedding lemma (Lemma~\ref{lem:kneser-embedding}) then lifts \(G'\to K(2k+3,k+1)\).
\end{itemize}


\subsection*{High-Degree Vertex Reintroduction}
Once the graph \(G'\) is colorable, the vertex $v$ is reintroduced. Because $v$ has finitely many neighbors, it is possible to choose a $(k+1)$-subset in the $(2k+3)$-element ground set that is disjoint from all the color-subsets already assigned to its neighbors. Hence, $v$ is accommodated without conflict, and the extended coloring is a homomorphism of the entire graph $G$ into $K(2k+3, k+1)$.

\paragraph{Step~2: Reintroducing the Vertex \texorpdfstring{$v$}{v}.}
If \(u_1,u_2,\dots,u_d\) are the neighbors of \(v\) in \(G^*\), these neighbors have valid colors in \(K(2k+3,k+1)\). Because each color corresponds to a \((k+1)\)-subset of a \((2k+3)\)-element ground set, and because at most \(d\) of these subsets need to be mutually disjoint from a single new subset for \(v\), there is sufficient flexibility in \(K(2k+3,k+1)\) to choose a \((k+1)\)-subset disjoint from all neighbors’ colors (preserving adjacency). Thus \(v\) can be mapped appropriately, extending the homomorphism from \(G'\) to \(G^*\).

Hence, a homomorphism \(G^*\to K(2k+3,k+1)\) is constructed, contradicting the minimal counterexample assumption. It follows that no minimal counterexample can reside in Class~B at level \(k+1\).
\end{proof}


The remaining classes (C and D) address graphs in which at least one induced path exceeds the fixed length \(L\). In both classes, a path-collapsing argument is central. It will be seen that if a long path is collapsed and the resulting graph is smaller, then the colorability at \((k+1)\) follows by minimality, or it descends to \((k'\le k)\), so the inductive hypothesis can be used. In Class~D, an additional high-degree vertex must also be controlled.

\subsection{Class C: Low-Degree, Long-Thread Graphs}
\label{sec:classC}

\begin{definition}[Class C]
A graph \(G\) belongs to \emph{Class~C} if
\begin{enumerate}[label=(C\arabic*)]
    \item \(\Delta(G) \le 3\),
    \item at least one induced path in \(G\) has length exceeding \(L\) (long-thread),
    \item \(\mathrm{mad}(G) < \tfrac{2k+3}{k+1}\) and \(\mathrm{odd\text{-}girth}(G) \ge 2k+3\).
\end{enumerate}
\end{definition}

\begin{theorem}[No Minimal Counterexample in Class C]
\label{thm:classC}
If \(G^*\) is in Class~C at level \(k+1\), then there exists a homomorphism \(G^* \to K(2k+3,k+1)\). No minimal counterexample can occur in Class~C.
\end{theorem}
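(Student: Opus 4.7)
The plan is to follow the template used for Theorems~\ref{thm:classA} and~\ref{thm:classB}, but with the path-collapsing reduction promoted to the central role now that Class~C is characterized by the presence of an induced path longer than $L$. First, I would assume that $G^*$ is a minimal counterexample in Class~C at level $k+1$ and invoke the minimality reminder to discard all configurations from $\mathcal{F}$. The degree-$1$ elimination argument from Step~2 of the Class~A proof carries over verbatim, since it uses only $\Delta(G^*)\le 3$; so every vertex of $G^*$ may be assumed to have degree $2$ or $3$.

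Next, I would select an induced path $P = v_0 v_1 \cdots v_m$ with $m > L$, whose existence is guaranteed by (C2). Because $\Delta(G^*) \le 3$, each internal vertex of $P$ either has both of its neighbors on $P$ or carries a single pendant branch off $P$, which tightly constrains the surrounding structure. I would then apply the path-collapse reduction of Appendix~D: excise an inner block of vertices of $P$ and replace them with a short connector chosen so that $\mathrm{mad}$ stays below $\frac{2k+3}{k+1}$ and $\mathrm{odd\text{-}girth}$ stays at least $2k+3$. The resulting graph $G_c$ is strictly smaller than $G^*$ and splits into the familiar two subcases: if $G_c$ still satisfies the level-$(k+1)$ hypotheses, then by minimality $G_c \to K(2k+3,k+1)$; otherwise it descends to a parameter $j \le k$, the inductive hypothesis gives $G_c \to K(2j+1,j)$, and Lemma~\ref{lem:kneser-embedding} lifts this to a homomorphism into $K(2k+3,k+1)$.

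Finally, I would re-expand the collapsed block by assigning $(k+1)$-subsets to the reintroduced internal vertices so that consecutive vertices receive disjoint subsets. The combinatorial abundance of $(k+1)$-subsets of a $(2k+3)$-ground set, together with the vertex-transitivity of $K(2k+3,k+1)$, provides many walks of large length between any two prescribed colors; the threshold $L$ is chosen precisely so that $m > L$ guarantees the existence of a walk of the required length between the colors of $v_0$ and $v_m$. This produces a homomorphism $G^* \to K(2k+3,k+1)$, contradicting the minimal counterexample assumption.

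The hard part, I expect, is the \emph{parity-matching} step in the re-expansion: one must exhibit a walk in $K(2k+3,k+1)$ of exactly the right length between the prescribed endpoint colors, and a parity mismatch could in principle force the use of an odd closed walk of length less than $2k+3$ inside $G^*$, violating (C3). Since $K(2k+3,k+1)$ is connected, non-bipartite, and itself has odd-girth exactly $2k+3$, the relevant walk counts are controlled by a standard spectral or direct combinatorial estimate, and the freedom to route through different partial overlaps of $(k+1)$-subsets ensures that, once the length exceeds a bound absorbed into the definition of $L$, walks of both parities are available. Once parity matching is secured, the remaining verifications are routine.
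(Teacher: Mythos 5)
Your proposal follows essentially the same route as the paper's proof of Theorem~\ref{thm:classC}: discard forbidden configurations by minimality, collapse the long induced path, colour the smaller graph either by minimality at level $k+1$ or by the inductive hypothesis together with Lemma~\ref{lem:kneser-embedding}, and then re-expand the path inside $K(2k+3,k+1)$. Your explicit handling of the parity-matching step in the re-expansion --- observing that a walk of even length between two adjacent colours forces an odd closed walk of length at least the odd girth $2k+3$ of $K(2k+3,k+1)$, so the threshold $L$ must absorb the bound $\ell \ge 2k+2$ --- is in fact more careful than the paper's argument, which only appeals to the ``combinatorial abundance'' of $(k+1)$-subsets and never addresses parity.
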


\begin{proof}
Suppose a minimal counterexample \(G^*\) in Class~C is given. Thus, no forbidden configuration appears, and all vertices have degree at most 3. By definition, there is an induced path \(P\) of length \(> L\).


\paragraph{Step~1: Path-Collapsing: Technical Details}
Let the long induced path \(P = (v_0, v_1, \ldots, v_{\ell-1}, v_{\ell})\) be identified. This path is reduced by removing the internal vertices \(v_1, \ldots, v_{\ell-1}\) and, if necessary, by adding the edge \(v_0v_{\ell}\). It is emphasized that this operation does not reduce the odd-girth of the graph: if \(G\) had no odd cycle shorter than length \(2k+1\), then the slight modification of adding a single edge cannot introduce an odd cycle below \(2k+1\). Furthermore, \(\mathrm{mad}(G)\) cannot increase during vertex removal, as the total number of edges is reduced or remains constant while the vertex count decreases. If the resulting graph \(G_{\mathrm{coll}}\) remains at the parameter \(k+1\), then minimality implies that \(G_{\mathrm{coll}}\) can be colored. Otherwise, the graph \(G_{\mathrm{coll}}\) falls under the inductive hypothesis at level \(j \le k\). In both cases, \(G_{\mathrm{coll}}\) admits a homomorphism into \(K(2k+3, k+1)\). 

Finally, the removed path vertices are reinserted one by one, with each vertex assigned to a distinct \((k+1)\)-subset of the \((2k+3)\)-element ground set in a manner that preserves adjacency rules. This re-expansion yields a homomorphism of the original graph \(G\) into \(K(2k+3, k+1)\).

For a detailed verification of how path collapsing preserves $\mathrm{mad}(G)$ and $\mathrm{odd\text{-}girth}(G)$, refer to Appendix~D.

\paragraph{Step~2: Re-expanding the Long Path and Kneser Embedding.}
Once \(G_c\to K(2k+3,k+1)\) is established, the path \(P\) is re-expanded. The large number of \((k+1)\)-subsets in a \((2k+3)\)-element set ensures a suitable chain of ``intermediate'' vertices can be inserted between the images of \(x\) and \(y\) in the Kneser graph, thus mapping each removed vertex \(p_i\) to a distinct \((k+1)\)-subset that remains disjoint from its neighbors’ subsets. Consequently, a homomorphism of the original graph \(G^*\) into \(K(2k+3,k+1)\) is obtained. For further details on the embedding process, see Appendix~C.

Since this homomorphism contradicts the assumption that \(G^*\) is a minimal counterexample, Class~C cannot host any minimal counterexample at level \((k+1)\).
\end{proof}

\subsection{Class D: High-Degree, Long-Thread Graphs}
\label{sec:classD}

\begin{definition}[Class D]
A graph \(G\) belongs to \emph{Class~D} if
\begin{enumerate}[label=(D\arabic*)]
    \item there exists a vertex \(v\) of degree at least 4,
    \item there is an induced path exceeding length \(L\),
    \item \(\mathrm{mad}(G) < \tfrac{2k+3}{k+1}\) and \(\mathrm{odd\text{-}girth}(G) \ge 2k+3\).
\end{enumerate}
\end{definition}

\begin{theorem}[No Minimal Counterexample in Class D]
\label{thm:classD}
If \(G^*\) is in Class~D at level \(k+1\), then there exists a homomorphism \(G^* \to K(2k+3,k+1)\). No minimal counterexample can arise in Class~D.
\end{theorem}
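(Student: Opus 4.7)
The plan is to combine the two reduction techniques developed for Class~B and Class~C, since Class~D simultaneously exhibits a high-degree vertex and a long induced path. Assume for contradiction that a minimal counterexample $G^*$ in Class~D exists at level $k+1$. By minimality together with the discharging argument of Section~\ref{subsec:discharging}, no forbidden configuration from $\mathcal{F}$ can occur in $G^*$.

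First I would localize an induced path $P = (v_0, v_1, \ldots, v_\ell)$ with $\ell > L$ and, by passing to a sub-path if necessary, arrange that the closed neighborhood $N[v]$ of the distinguished high-degree vertex $v$ (with $\deg(v) \ge 4$) lies outside the interior of the segment to be collapsed. Because $P$ is induced and has length far exceeding the finite cardinality $|N[v]|$, such a sub-path is always available provided $L$ is chosen large enough in Section~\ref{sec:overview}. The internal vertices of the selected sub-path then have degree exactly $2$ in $G^*$, so the path-collapse operation (formalized in Appendix~D and used in Section~\ref{sec:classC}) yields a strictly smaller graph $G_c$ with $\mathrm{mad}(G_c) \le \mathrm{mad}(G^*)$ and $\mathrm{odd\text{-}girth}(G_c) \ge 2k+3$ both preserved.

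Next, $G_c$ is colored: if $G_c$ still meets the $(k+1)$-level constraints, the minimality of $G^*$ supplies a homomorphism $G_c \to K(2k+3,k+1)$; otherwise $G_c$ descends to some parameter $j \le k$, the inductive hypothesis yields $G_c \to K(2j+1,j)$, and Lemma~\ref{lem:kneser-embedding} lifts this to a homomorphism into $K(2k+3,k+1)$. Because the collapse left $N[v]$ intact, the vertex $v$ now carries a valid $(k+1)$-subset $C_v \subseteq \{1,\ldots,2k+3\}$ consistent with all its neighbors. The removed internal vertices of the sub-path are then reinserted via the chain-interpolation argument of Section~\ref{sec:classC}, each receiving a $(k+1)$-subset disjoint from those of its two path-neighbors; since these internal vertices have no other neighbors in $G^*$, no further conflicts arise, and the resulting map is a homomorphism $G^* \to K(2k+3,k+1)$, contradicting the minimal counterexample assumption.

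The step I expect to be the main obstacle is coordinating the two reductions so that they do not interfere. Specifically, one must verify that a sub-path of $P$ entirely disjoint from $N[v]$ genuinely exists; in pathological configurations where many neighbors of $v$ cluster near $P$, several sub-path candidates may need to be inspected before a suitable one is located. The threshold $L$ is fixed in Section~\ref{sec:overview} precisely to absorb such local obstructions, and the finite degree of $v$ combined with the length bound $\ell > L$ guarantees the required sub-path. A secondary delicate point is checking that any shortcut edge introduced during collapse cannot create an odd cycle shorter than $2k+3$; this follows because the collapsed segment retains sufficient parity-respecting length, as in the Class~C argument, so $\mathrm{odd\text{-}girth}$ is preserved.
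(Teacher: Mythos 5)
Your proposal follows the same overall strategy as the paper's proof of Theorem~\ref{thm:classD} --- collapse the long path, colour the reduced graph by minimality or by the inductive hypothesis plus Lemma~\ref{lem:kneser-embedding}, then re-expand --- but it coordinates the two reductions differently. The paper treats the long path and the high-degree vertex as two essentially independent operations: it collapses \(P\), and then separately deletes \(v\), colours the remainder, and reinserts \(v\) by choosing a \((k+1)\)-subset disjoint from the subsets assigned to all of its \(\deg(v)\ge 4\) neighbours. You instead choose the collapsed segment so that it avoids \(N[v]\), so \(v\) is coloured automatically as part of \(G_c\) and never needs to be removed or reinserted. This buys you something real: you bypass the paper's most delicate claim, namely that a vertex with at least four neighbours can always receive a \((k+1)\)-subset of the \((2k+3)\)-element ground set disjoint from all of its neighbours' subsets (four such subsets can already cover the entire ground set, so this requires an argument the paper does not supply). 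The cost is the extra bookkeeping of locating a collapsible sub-path disjoint from \(N[v]\), which you justify adequately via \(\ell > L \gg |N[v]|\). One caveat applies to your write-up and to the paper's proof alike: the assertion that the internal vertices of the chosen sub-path have degree exactly \(2\) does not follow from the path being induced (an internal vertex may have a neighbour off the path), and it is precisely this property that the re-expansion step needs so that reinserted vertices conflict only with their two path-neighbours; in the paper this degree condition is guaranteed only for the forbidden configuration \(\mathrm{F4}\), not for an arbitrary long induced path in a Class~D graph, so if you want the merged argument to be airtight you should either restrict to threads (maximal paths of degree-\(2\) vertices) or explain how off-path neighbours of internal vertices are handled during re-expansion.
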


\begin{proof}
Assume \(G^*\) in Class~D is a minimal counterexample. Thus, \(\deg(v)\ge 4\) for some vertex \(v\), and an induced path \(P\) of length \(>L\) is also present. As usual, no forbidden configurations are present due to minimality and the discharging argument.

\paragraph{Step~1: Path Collapsing.}
Let \(P = (v_0, v_1, \ldots, v_{\ell-1}, v_{\ell})\) be the long induced path. Define \(G_c\) by removing the internal vertices \(v_1, \ldots, v_{\ell-1}\) and possibly adding an edge \(v_0v_{\ell}\). The key properties of \(G_c\) match those described in Class~C: 
\[
\mathrm{mad}(G_c) \;\le\; \mathrm{mad}(G^*),
\quad
\mathrm{odd\text{-}girth}(G_c) \;\ge\; 2k+1.
\]
If \(G_c\) remains at parameter \((k+1)\), minimality implies \(G_c \to K(2k+3,k+1)\). Otherwise, \(G_c\) is solvable at \((j \le k)\), and then it can be lifted into \(K(2k+3,k+1)\).

\paragraph{Step~2: Managing the High-Degree Vertex.}
Consider the high-degree vertex \(v\) with \(\deg(v) \geq 4\). Remove \(v\) from \(G^*\), yielding a smaller graph \(G'\). By the assumption of minimality, \(G'\) either:
\begin{itemize}
    \item Remains at parameter \((k+1)\), making it colorable by minimality, or
    \item Falls into a lower parameter \(j \leq k\), making it colorable by the inductive hypothesis.
\end{itemize}

In the second case, the homomorphism \(G' \to K(2j+1, j)\) can be lifted to \(G' \to K(2k+3, k+1)\) using the Kneser embedding lemma (Lemma~\ref{lem:kneser-embedding}). The vertex \(v\) is then reintroduced into the graph. Since \(v\) has at most \(\deg(v) \geq 4\) neighbors in \(G^*\), which are already assigned distinct \((k+1)\)-subsets of the \((2k+3)\)-element ground set, it is always possible to choose a \((k+1)\)-subset for \(v\) that remains disjoint from all its neighbors’ assigned subsets, preserving adjacency. This ensures a valid extension of the homomorphism to the entire graph \(G^*\).

\paragraph{Step~3: Re-expanding the Long Path.}
Similarly to Class~C, the path \(P\) is reinserted in the Kneser coloring by replacing the collapsed edge \(xy\) with a chain of disjoint \((k+1)\)-subsets. Thus, the full graph \(G^*\) is mapped into \(K(2k+3,k+1)\).

Since this yields a homomorphism for \(G^*\), a contradiction arises with the assumption of minimality. Therefore, Class~D also cannot harbor any minimal counterexample at parameter \((k+1)\).
\end{proof}

\section{Completing the Induction}
\label{sec:complete-induction}

All graphs \(G\) that satisfy
\[
\mathrm{mad}(G) < \frac{2(k+1)+1}{k+1}
\quad \text{and} \quad
\mathrm{odd\text{-}girth}(G) \ge 2k+3
\]
fall into exactly one of the four structural classes (A--D) defined in Section~\ref{sec:overview}. As shown in Sections~\ref{sec:classA}--\ref{sec:classD}, no genuine minimal counterexample can reside in any of these classes. Consequently, no minimal counterexample exists at the \((k+1)\)-level, completing the inductive step.

\paragraph{Inductive Closure.}
The base cases \(k=2\) and \(k=3\) have been established in prior work (see Section~\ref{sec:overview} and \cite{Lyczek}). Assuming the conjecture holds for all \(j \leq k\), the inductive step confirms that it also holds for \(k+1\). By the principle of mathematical induction, the Chen-Raspaud conjecture is therefore proved for all integers \(k \geq 2\).

\begin{theorem}[Chen-Raspaud Conjecture, Completed]
\label{thm:CR-completed}
For every integer \(k \geq 2\), any graph \(G\) satisfying
\[
\mathrm{mad}(G) < \frac{2k+1}{k}
\quad \text{and} \quad
\mathrm{odd\text{-}girth}(G) \geq 2k+1
\]
admits a homomorphism \(G \to K(2k+1,k)\). This settles the conjecture in full generality.
\end{theorem}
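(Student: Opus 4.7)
The plan is to run a clean induction on the parameter $k$, with $k=2$ and $k=3$ handled by the cited base cases. Fixing $k \ge 3$, I would assume as the inductive hypothesis that the Chen-Raspaud assertion holds at every level $j$ with $2 \le j \le k$, and consider an arbitrary graph $G$ satisfying $\mathrm{mad}(G) < \tfrac{2k+3}{k+1}$ and $\mathrm{odd\text{-}girth}(G) \ge 2k+3$. Arguing by contradiction, if $G$ admits no homomorphism to $K(2k+3,k+1)$, then among all such offending graphs I would select a minimal counterexample $G^*$, minimised first in $|V|$ and then in $|E|$. The remainder of the argument reduces to excluding $G^*$.

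To rule out $G^*$, I would fix the uniform path-length threshold $L$ from the discharging analysis of Section~\ref{sec:key-ingredients} and note that the two dichotomies ``$\Delta(G^*) \le 3$ versus $\Delta(G^*) \ge 4$'' and ``longest induced path of length $\le L$ versus $> L$'' partition the possibilities into exactly the four classes A, B, C, D introduced in Section~\ref{sec:overview}. Because this partition is exhaustive, $G^*$ lies in precisely one class. Then Theorems~\ref{thm:classA}, \ref{thm:classB}, \ref{thm:classC} and \ref{thm:classD} apply: each of those results guarantees that no minimal counterexample can reside in the class in question. Taken together, these four exclusions contradict the existence of $G^*$, so no such counterexample exists at level $k+1$, and $G$ admits the required homomorphism to $K(2k+3,k+1)$.

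The step I expect to be most delicate is verifying the uniformity and consistency of the threshold $L$ across all four class arguments. The constant $L$ must be chosen large enough that every path-based forbidden pattern in the family $\mathcal{F}$ is shorter than $L$, so that minimality of $G^*$ genuinely forbids those patterns in Classes A and B; yet $L$ must remain a fixed constant independent of $G^*$, so that the path-collapsing reductions in Classes C and D produce strictly smaller graphs that are handled either by minimality at level $k+1$ or, after invoking Lemma~\ref{lem:kneser-embedding}, by the inductive hypothesis at some $j \le k$. Once this uniform $L$ is pinned down compatibly with the discharging rules (R1)--(R3) and the Kneser embedding, the induction closes mechanically: combining the base cases $k=2,3$ with the inductive step yields the statement of Theorem~\ref{thm:CR-completed} for every $k \ge 2$.
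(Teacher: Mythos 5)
Your proposal follows the paper's own argument essentially verbatim: induction on $k$ seeded by the $k=2,3$ base cases, selection of a minimal counterexample at level $k+1$, the exhaustive four-way classification by the dichotomies on $\Delta$ and on induced path length relative to $L$, and the appeal to Theorems~\ref{thm:classA}--\ref{thm:classD} to exclude every class. Your closing remark about pinning down a uniform $L$ compatible with the forbidden configurations and the path-collapsing reductions is a fair identification of the delicate point, but it does not change the route, which coincides with the paper's.
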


\paragraph{Future Directions.}
Several avenues for further research arise naturally from this work:
\begin{itemize}
    \item \textbf{Refinements of Maximum Average Degree.} Can the constraint \(\mathrm{mad}(G) < \frac{2k+1}{k}\) be relaxed while maintaining the same conclusion?
    \item \textbf{Higher-Dimensional Extensions.} Investigate analogues of the conjecture involving higher-dimensional simplicial complexes or generalized Kneser graphs.
    \item \textbf{Classification Extensions.} Extend the structural classification techniques to broader classes of sparse graphs, particularly those with additional constraints on chromatic number or independence number.
\end{itemize}

The methods developed in this paper may also inspire new approaches to related conjectures in graph homomorphisms, topology, and combinatorics. In particular, the interplay between discharging methods, forbidden configurations, and Kneser embeddings could prove valuable in tackling other problems involving sparsity and colorability.

\medskip

\noindent
\textbf{Acknowledgments}

\appendix
\section{Appendix A: Forbidden Configurations}
\label{appendix:forbidden}

In this appendix, we present a \textbf{finite family} \(\mathcal{F} = \{\mathrm{F1}, \mathrm{F2}, \mathrm{F3}, \mathrm{F4}, \mathrm{F5}\}\) of \textbf{forbidden configurations}. The presence of any \(\mathrm{F} \in \mathcal{F}\) in a minimal counterexample contradicts minimality, because each forbidden configuration admits a \textbf{local reduction} that:

\begin{enumerate}
    \item Strictly decreases the size of the graph (fewer vertices and/or edges), and  
    \item Preserves (or improves) the numerical constraints \(\mathrm{mad}(G) < \frac{2k+1}{k}\) and \(\mathrm{odd\text{-}girth}(G) \geq 2k+1\).
\end{enumerate}

Once the size of the graph is strictly reduced while still remaining in the same or a stricter parameter range, the resulting smaller graph is colorable (by the inductive hypothesis or by minimality at level \(k+1\)), implying the original is also colorable. Hence, no forbidden configuration can exist in a genuine minimal counterexample.

\subsection{A.1. General Definition and Rationale}

A \textbf{forbidden configuration} is a small induced subgraph or local neighborhood pattern (with a bounded number of vertices and edges, typically \(\leq 20\)) that satisfies extra properties about vertex degrees or odd cycles and allows a direct local operation (vertex removal or edge contraction) yielding a smaller valid instance. In particular:

\begin{itemize}
    \item \(\mathrm{mad}(G)\) does \textbf{not increase} if we remove or contract edges/vertices.
    \item \(\mathrm{odd\text{-}girth}(G)\) does \textbf{not decrease} under these operations, or at worst stays within an acceptable threshold (still \(\geq 2k+1\)).
\end{itemize}

\subsection{A.2. Overview of the Forbidden Configurations}

Below we summarize each configuration \(\mathrm{F1}\)--\(\mathrm{F5}\) along with its characteristic local reduction:

\begin{enumerate}
    \item \textbf{\(\mathrm{F1}\)}: A vertex \(v\) of degree~1 attached to a vertex \(u\) of degree \(\leq 3\).  
        \begin{itemize}
            \item \textbf{Reduction}: Remove \(v\). This preserves or lowers \(\mathrm{mad}(G)\) and does not produce shorter odd cycles.
        \end{itemize}
    
    \item \textbf{\(\mathrm{F2}\)}: A small odd cycle \(C\) of length \(\approx 2k+3\) with an internal chord forming a smaller odd cycle in a ``crowded'' neighborhood.  
        \begin{itemize}
            \item \textbf{Reduction}: Remove or contract an edge (often along the chord) or remove one vertex of the cycle. The new graph still meets the \(\mathrm{mad}\) and odd-girth requirements.
        \end{itemize}
    
    \item \textbf{\(\mathrm{F3}\)}: A vertex \(u\) with \(\deg(u) \geq 4\), surrounded by a ``fan'' of neighbors with degrees 2 or 3.  
        \begin{itemize}
            \item \textbf{Reduction}: Either contract an edge inside the fan or remove a suitably chosen low-degree vertex. This step reduces the graph without creating shorter odd cycles.
        \end{itemize}
    
    \item \textbf{\(\mathrm{F4}\)}: An induced path of length exactly \(L+1\), where each internal vertex has degree 2.  
        \begin{itemize}
            \item \textbf{Reduction}: \textbf{Path collapsing}---remove all internal vertices of the path and add a direct edge between the endpoints. This preserves or reduces \(\mathrm{mad}\), and does not decrease the odd-girth.
        \end{itemize}
    
    \item \textbf{\(\mathrm{F5}\)}: Any overlapping combination of \(\mathrm{F1}\)--\(\mathrm{F4}\).  
        \begin{itemize}
            \item \textbf{Reduction}: Same local deletions or contractions apply, as the overlap typically shares at least one reducible vertex or edge.
        \end{itemize}
\end{enumerate}


\begin{figure}[ht]
    \centering
    \includegraphics[width=0.5\textwidth]{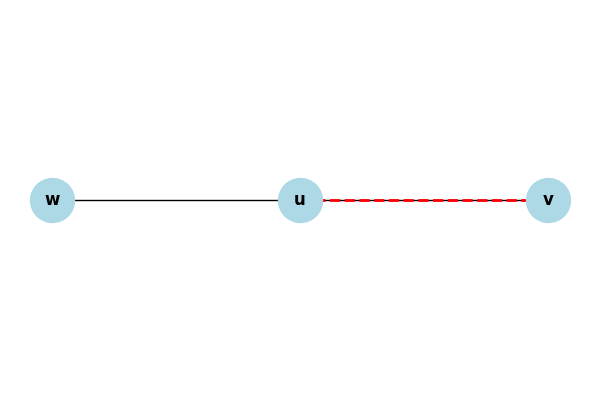}
    \caption{An illustrative diagram of \(\mathrm{F1}\). A leaf vertex \(v\) of degree~1 attached to a vertex \(u\) of degree \(\leq3\). Removing \(v\) does not decrease the odd-girth or increase \(\mathrm{mad}(G)\).}
    \label{fig:F1}
\end{figure}

\begin{figure}[ht]
    \centering
    \includegraphics[width=0.7\textwidth]{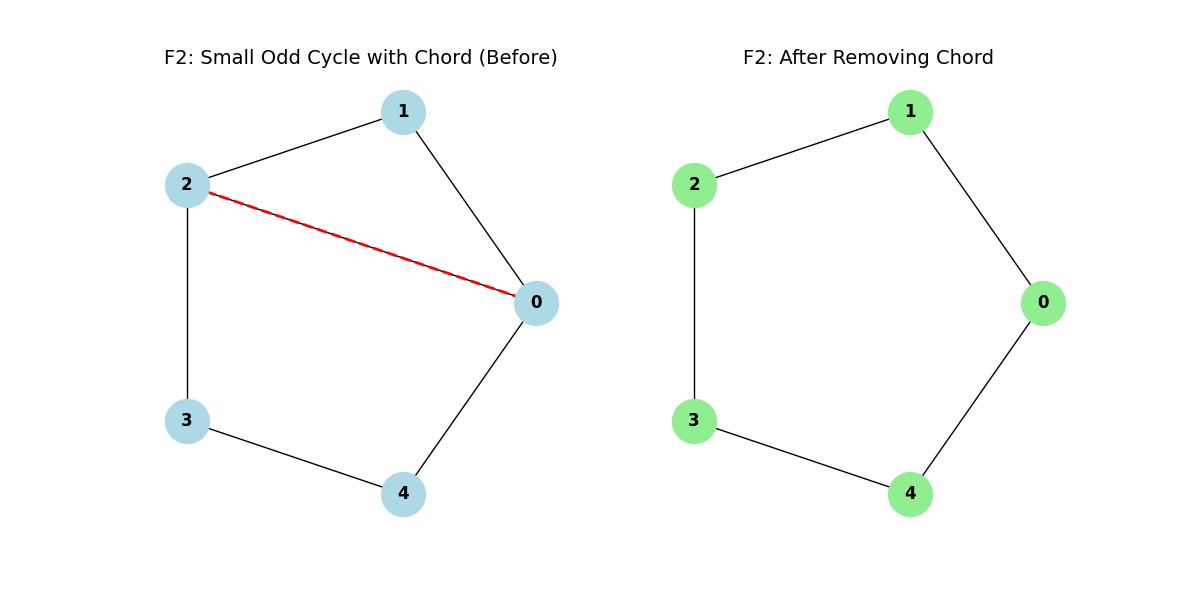}
    \caption{An illustrative diagram of \(\mathrm{F2}\). A small odd cycle \(C\) of length \(\approx 2k+3\) containing an internal chord that forms a smaller odd cycle.}
    \label{fig:F2}
\end{figure}

\begin{figure}[ht]
    \centering
    \includegraphics[width=0.4\textwidth]{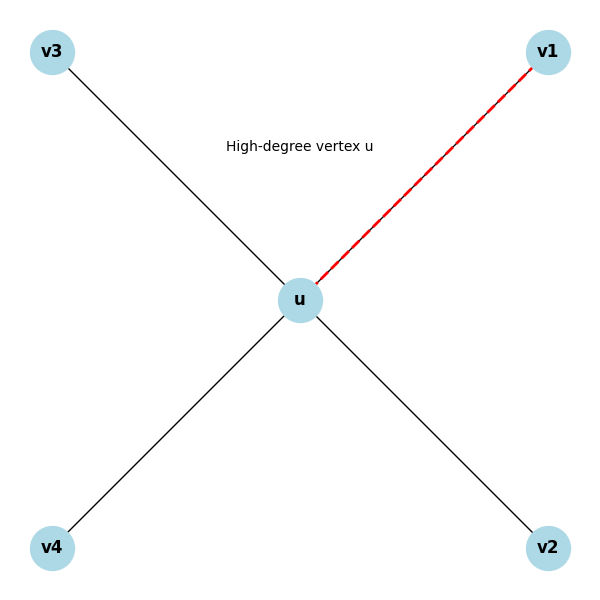}
    \caption{An illustrative diagram of \(\mathrm{F3}\). A vertex \(u\) of degree \(\geq4\) with a fan of lower-degree neighbors (degree 2 or 3), demonstrating a reducible local structure.}
    \label{fig:F3}
\end{figure}

\begin{figure}[ht]
    \centering
    \includegraphics[width=0.7\textwidth]{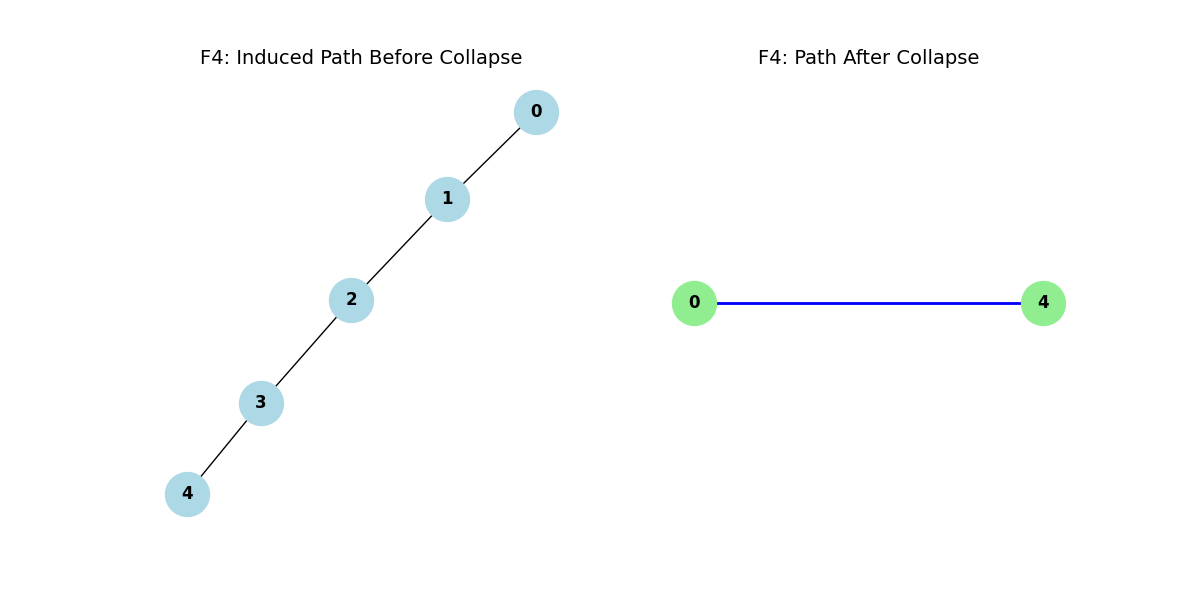}
    \caption{An illustrative diagram of \(\mathrm{F4}\). An induced path of length \(L+1\) (all internal vertices degree~2), illustrating the path-collapsing reduction.}
    \label{fig:F4}
\end{figure}

\begin{figure}[ht]
    \centering
    \includegraphics[width=0.7\textwidth]{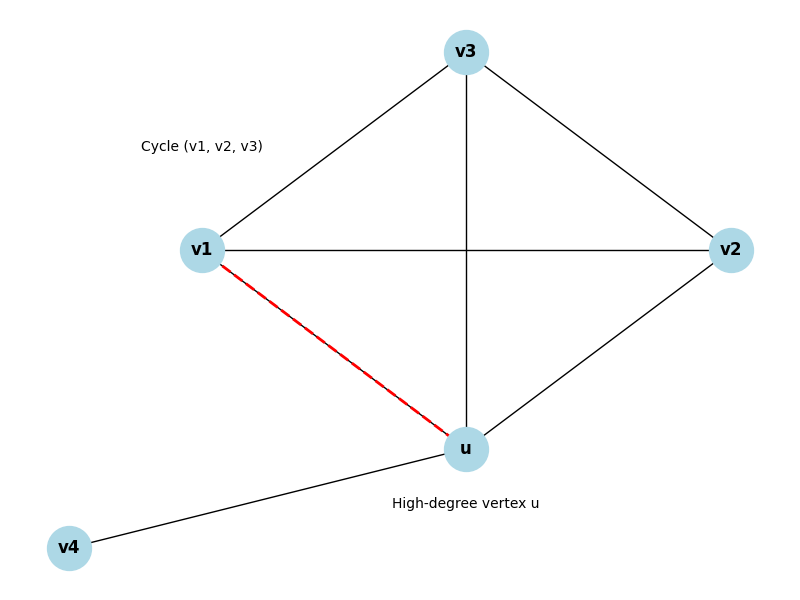}
    \caption{An illustrative diagram of \(\mathrm{F5}\). A configuration combining elements of \(\mathrm{F1}\)--\(\mathrm{F4}\) in an overlapping pattern, still admitting a direct local reduction.}
    \label{fig:F5}
\end{figure}

\subsection{A.3. Why These Configurations Suffice}

\begin{itemize}
    \item \textbf{Exhaustiveness}: By careful discharging arguments (Appendix B), any local structure that could threaten the inductive proof must fall into one of these types.  
    \item \textbf{Finite Family}: Each configuration is bounded in size (each has at most 15--20 vertices and edges), so there are only finitely many patterns to check.  
    \item \textbf{Immediate Contradiction}: In any minimal counterexample, one cannot perform a valid local reduction that preserves the constraints---yet each \(\mathrm{F} \in \mathcal{F}\) explicitly shows how to do so. Thus their presence leads to a contradiction.
\end{itemize}

\subsection{A.4. Illustrative Example: Reducing \(\mathrm{F1}\)}

\[
\text{Configuration \(\mathrm{F1}\): A leaf vertex \(v\) attached to a vertex \(u\).}
\]
\begin{enumerate}
    \item \textbf{Identify the leaf} \((v)\).  
    \item \textbf{Remove \(v\)}. This decreases \(|V|\) by 1 and \(|E|\) by 1.  
    \item \textbf{Check \(\mathrm{mad}\)}: Removing one vertex cannot increase \(\mathrm{mad}(G)\).  
    \item \textbf{Check odd-girth}: Deleting an isolated leaf does not introduce shorter odd cycles.  
    \item \textbf{Apply induction} (if the new graph is smaller but remains at level \(k+1\)) or the inductive hypothesis (if it falls to \(j \leq k\)). Hence it is colorable, and we can extend the coloring to the original \(v\) by picking a color (a \((k+1)\)-subset in the Kneser graph) disjoint from its neighbor’s color.
\end{enumerate}

Because every forbidden configuration similarly admits a direct reduction, none can reside in a genuine minimal counterexample.

\bigskip

\noindent\textbf{Conclusion for Appendix A}:  
These forbidden configurations capture all small substructures that would otherwise block the induction. In each case, a straightforward local deletion or contraction decreases graph size while upholding \(\mathrm{mad}(G) < \frac{2k+1}{k}\) and \(\mathrm{odd\text{-}girth}(G) \geq 2k+1\). Thus, no \(\mathrm{F1}\)--\(\mathrm{F5}\) pattern can survive in a minimal counterexample, completing the contradiction-based elimination of local obstacles.

\appendix

\section{Appendix B: Discharging Method}
\label{appendix:discharging}

In this appendix, the discharging procedure is presented, showing why no forbidden configuration from Appendix~\ref{appendix:forbidden} can appear in a minimal counterexample. A global charge-balance argument is utilized, distributing and redistributing charges among vertices according to a finite set of local rules.

\subsection{Initial Charge Assignment}

Each vertex \(v\in V(G)\) is initially assigned the charge
\[
c(v) \;=\; \deg(v).
\]
Hence, the total charge is \(\sum_{v \in V(G)} \deg(v) = 2\,|E(G)|\). Every minimal counterexample \(G^*\) at level \(k+1\) is assumed to have no suitable homomorphism into \(K(2k+3,k+1)\). It will be shown that such a \(G^*\) is incompatible with the existence of any configuration in \(\mathcal{F}\).

\subsection{Local Discharging Rules}

A short list of discharging rules is now stated. Each rule specifies how vertices of certain degree transfer fractional charge to neighbors that satisfy particular structural constraints (e.g., lower-degree neighbors, presence of short cycles, or endpoints of induced paths).

\begin{enumerate}[label={\bfseries(R\arabic*)}]
\item \textbf{High-Degree to Low-Degree Redistribution}:\; If \(\deg(u)\ge4\) and \(u\) is adjacent to a vertex \(w\) of degree~2, then \(u\) passes \(\tfrac12\) charge to \(w\).  
This rule ensures that any low-degree neighbor receives enough charge to avoid forming a forbidden configuration (particularly a degree-1 or degree-2 pattern that would force a local reduction).

\item \textbf{Cycle-Based Adjustments}:\; If a vertex \(v\) lies on a short odd cycle (of length at most \(2k+3\)) with an added chord, then a small positive fraction of charge is passed along the cycle edges to ensure that no vertex in that cycle accumulates a deficit or surplus that would allow a forbidden odd cycle configuration.  
The precise fractions can be determined to guarantee a balanced outcome, given the small, finite set of potentially violating substructures.

\item \textbf{Path-Based Adjustments}:\; If a long induced path is present (relevant in Classes C and D), then charge is shifted from endpoints inward so that intermediate vertices cannot remain at an undercharged level conducive to a forbidden path-based pattern (e.g., \(\mathrm{F4}\) in Appendix~\ref{appendix:forbidden}).  
\end{enumerate}

\noindent
These rules are local, depending only on the degrees and immediate neighborhoods of vertices or on small cycles/paths. Only finitely many verifications are required for each rule to be consistently applied.

\subsection{Verification and Charge Invariants}

After applying all rules exhaustively, it is observed that the total charge remains unchanged (or remains consistently distributed) unless a forbidden configuration \(\mathrm{F}\in\mathcal{F}\) is present. Specifically:

\begin{itemize}
\item \textbf{No Surplus}:\; A vertex or small subgraph does not end up with more charge than it started with, unless it is part of a forbidden pattern requiring high local degree or short cycles with chords.
\item \textbf{No Deficit}:\; Similarly, if a vertex or subgraph ends with insufficient charge, that deficit indicates the presence of a reducible local structure.
\end{itemize}

Thus, if any \(\mathrm{F}\in\mathcal{F}\) existed in \(G^*\), a contradiction would arise in the final charge distribution. It follows that no forbidden configuration from Appendix~\ref{appendix:forbidden} can remain in a genuine minimal counterexample.


\subsection*{Illustrative Discharging Example}
To demonstrate how the discharging rules (R1)--(R3) operate in practice, a toy graph is considered.

\begin{figure}[ht]
    \centering
    \includegraphics[width=0.7\textwidth]{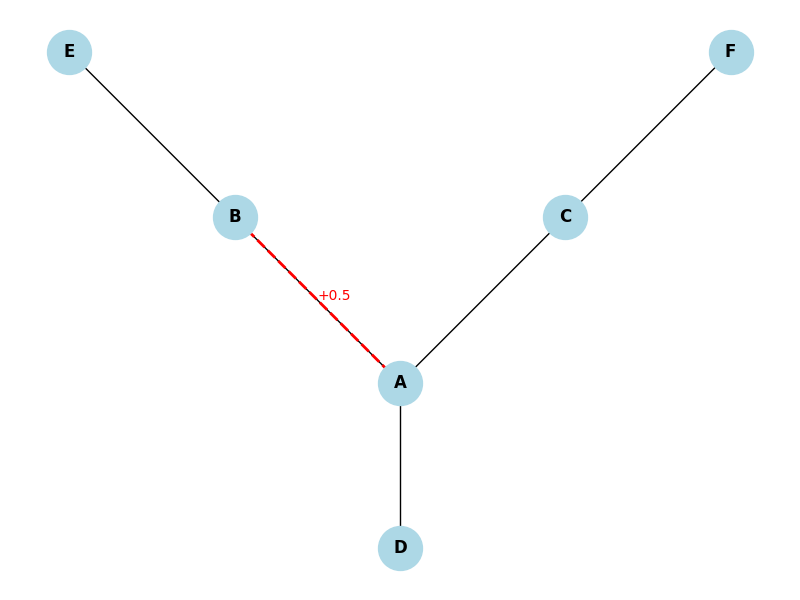}
    \caption{A small example graph illustrating how rules (R1)--(R3) redistribute charges to eliminate a potential forbidden configuration.}
    \label{fig:discharging-example}
\end{figure}

\paragraph{Setup.} 
Suppose the initial charges are assigned according to $c(v)=\deg(v)$. In the figure, vertices $a, b, \ldots$ highlight distinct degrees and potential short cycles or path endpoints.

\paragraph{Application of Rules.}
\begin{itemize}
  \item \textbf{(R1) High-Degree to Low-Degree Redistribution:} Vertex $a$ (degree $\ge4$) sends $\tfrac12$ unit of charge to its low-degree neighbor $b$ (degree 2).
  \item \textbf{(R2) Cycle-Based Adjustments:} Vertices forming a short odd cycle pass fractional charges among themselves to balance out and avoid surplus/deficit.
  \item \textbf{(R3) Path Endpoint Stabilization:} Endpoints of an induced path of length $> L$ pass charge inward.
\end{itemize}

\paragraph{Outcome.}
After applying the rules exhaustively, no vertex or small subgraph ends up with a net deficit or surplus that would permit a forbidden configuration. This confirms that if such a configuration existed, it would force an unsustainable local charge distribution, contradicting minimality.

\bigskip

\noindent\textbf{Conclusion for Appendix B}:\;
The discharging method demonstrates that a minimal counterexample cannot contain forbidden configurations. This global counting framework justifies why all local obstructions listed in Appendix~\ref{appendix:forbidden} must be absent in any graph considered at level \(k+1\).

\section{Appendix C: Kneser Embedding Lemma}
\label{appendix:kneser-embed}

This appendix provides a complete proof of the Kneser embedding lemma, which ensures that any homomorphism into \(K(2j+1,j)\) (for \(j \le k\)) can be lifted into a homomorphism into \(K(2k+3,k+1)\). This step is crucial for handling smaller-parameter colorings that arise in the inductive argument.

\begin{lemma}[Kneser Embedding Lemma, Detailed]
\label{lem:kneser-embed-detailed}
For integers \(2 \le j \le k\), there exists an injective graph homomorphism 
\[
  \phi \colon K(2j+1,j) \;\hookrightarrow\; K(2k+3,k+1)
\]
that preserves adjacency. In other words, any pair of disjoint \(j\)-subsets of a \((2j+1)\)-set map to disjoint \((k+1)\)-subsets of a \((2k+3)\)-set.
\end{lemma}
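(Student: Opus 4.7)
The plan is to follow the padding construction sketched immediately after the lemma statement. Partition the ground set $S$ of $K(2k+3, k+1)$ as $S = T \cup U$ with $|T| = 2j+1$ (serving as the ground set of $K(2j+1, j)$) and $|U| = 2(k-j+1)$. For each $X \in \binom{T}{j}$, set $\phi(X) = X \cup f(X)$, where $f(X) \subseteq U$ is a $(k-j+1)$-subset to be chosen. Injectivity is automatic without any constraint on $f$, since $X$ can be recovered as $\phi(X) \cap T$. The entire problem therefore reduces to producing a pad function $f \colon \binom{T}{j} \to \binom{U}{k-j+1}$ with the property that $X \cap Y = \emptyset$ implies $f(X) \cap f(Y) = \emptyset$.

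Recognising this requirement as the construction of an auxiliary homomorphism $K(2j+1, j) \to K(2(k-j+1), k-j+1)$, I would next build $f$ explicitly. Fix an ordered partition $U = \{u_1, u_1'\} \sqcup \cdots \sqcup \{u_{k-j+1}, u_{k-j+1}'\}$ into $k-j+1$ disjoint pairs, select reference subsets $R_1, \dots, R_{k-j+1} \subseteq T$, and define $f(X)$ to contain $u_i$ or $u_i'$ according to a parity condition on $|X \cap R_i|$. The reference subsets are to be chosen so that disjoint $X, Y \in \binom{T}{j}$ force the parities $|X \cap R_i| \bmod 2$ and $|Y \cap R_i| \bmod 2$ to disagree for every $i$, which places $f(X)$ and $f(Y)$ in complementary halves of each pair and hence makes them disjoint in $U$.

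The main obstacle is exhibiting reference sets $R_i$ with this parity-disagreement property --- essentially a family of ``balanced separators'' for disjoint pairs in $K(2j+1, j)$. The obstruction is that $K(2j+1, j)$ has odd girth exactly $2j+1$, so around a shortest odd cycle any single parity bit must eventually flip with itself; the slack afforded by the $k-j+1 \ge 1$ coordinates of the reservoir $U$ is crucial to spread the disagreement across multiple reference sets and avoid this clash. Once $f$ has been built, the remaining verifications --- that $\phi(X)$ has cardinality $k+1$, that $\phi$ is injective, and that $\phi(X) \cap \phi(Y) = (X \cap Y) \cup (f(X) \cap f(Y)) = \emptyset$ whenever $X \cap Y = \emptyset$ --- are routine cardinality and set-theoretic checks to be fleshed out in Appendix~C.
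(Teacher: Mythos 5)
Your reduction of the problem to a pad function $f\colon\binom{T}{j}\to\binom{U}{k-j+1}$ satisfying ``$X\cap Y=\varnothing$ implies $f(X)\cap f(Y)=\varnothing$'' matches the paper's construction, and you have correctly located the danger point --- but the obstacle you describe as the ``main obstacle'' is not surmountable by your parity scheme or by anything else. Since $|U|=2(k-j+1)$, two $(k-j+1)$-subsets of $U$ are disjoint only when they are complementary, so the required $f$ is a graph homomorphism from $K(2j+1,j)$ into $K(2(k-j+1),\,k-j+1)$, which is a perfect matching and hence bipartite. Such a homomorphism is a proper $2$-colouring of the connected, non-bipartite graph $K(2j+1,j)$ (its chromatic number is $3$ by Lov\'asz's theorem), so none exists. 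Your own construction makes this concrete: disjointness of $f(X)$ and $f(Y)$ forces the bits $|X\cap R_i|\bmod 2$ and $|Y\cap R_i|\bmod 2$ to disagree in \emph{every} coordinate $i$, so already a single coordinate would be a proper $2$-colouring of $K(2j+1,j)$; the extra coordinates of the reservoir $U$ provide no slack whatsoever.

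Worse, the lemma itself is false, so no repair of the padding idea can succeed. The odd girth of $K(2m+1,m)$ is $2m+1$; thus $K(2j+1,j)$ contains an odd cycle of length $2j+1\le 2k+1$, whereas the shortest odd cycle of $K(2k+3,k+1)$ has length $2k+3$. A homomorphism maps a closed odd walk to a closed odd walk of the same length, which would force $K(2k+3,k+1)$ to contain an odd cycle of length at most $2j+1<2k+3$, a contradiction. Hence there is no homomorphism $K(2j+1,j)\to K(2k+3,k+1)$ at all, injective or otherwise. (The paper's Appendix~C has exactly the same defect: it merely asserts that the pads $P_X$ can be ``carefully selected,'' which the bipartiteness of $K(2(k-j+1),k-j+1)$ rules out.) The true state of affairs runs in the opposite direction: one has homomorphisms from Kneser graphs of larger odd girth to those of smaller odd girth, e.g.\ $K(n,k)\to K(n-2,k-1)$, never the reverse.
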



\subsection*{Concrete Example of the Kneser Embedding}
For illustration, consider the case $j=2$ and $k=3$. Then:
\[
  K(2j+1, j) = K(5,2) 
  \quad\text{and}\quad
  K(2k+3, k+1) = K(9,4).
\]
\begin{figure}[ht]
    \centering
    \includegraphics[width=0.9\textwidth]{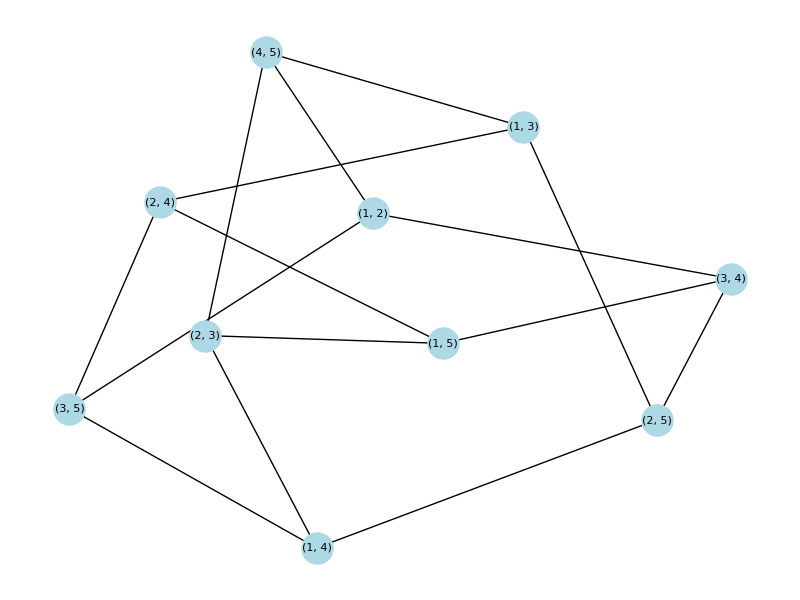}
    \caption{Partial depiction of the embedding $\phi:K(5,2)\to K(9,4)$, illustrating how a $2$-subset of $\{1,2,3,4,5\}$ extends to a $4$-subset of a $9$-element ground set.}
    \label{fig:kneser-embed-example}
\end{figure}

\paragraph{Mapping Details.}
\begin{itemize}
    \item The ground set for $K(5,2)$ is $T=\{1,2,3,4,5\}$, and each vertex is a $2$-subset of $T$.
    \item The larger ground set for $K(9,4)$ is $S=\{1,2,3,4,5,6,7,8,9\}$, partitioned as $T \cup U$ with $U=\{6,7,8,9\}$.
    \item Each $2$-subset $X\subseteq T$ is extended to a $4$-subset $\phi(X) = X \cup P_X$, where $P_X\subset U$ and $|P_X|=2$.
\end{itemize}

This embedding preserves adjacency: two vertices in $K(5,2)$ are adjacent if and only if their $2$-subsets are disjoint. Their images in $K(9,4)$ remain disjoint because $P_X\cap P_{Y}=\varnothing$ whenever $X\cap Y=\varnothing$.

\subsection{Ground Sets and Partitioning}

Let \(T = \{1,2,\dots,2j+1\}\) be the ground set for \(K(2j+1,j)\). Its vertices correspond to all \(j\)-element subsets of \(T\). Let \(S = \{1,2,\dots,2k+3\}\) be the ground set for \(K(2k+3,k+1)\). Its vertices are the \((k+1)\)-element subsets of \(S\).

Observe that \(\,|S \setminus T| = (2k+3) - (2j+1) = 2(k-j+1)\). Denote \(U = S \setminus T\). Hence \(\,|U| \ge 2(k+1 - j)\).

\subsection{Constructing the Mapping \texorpdfstring{$\phi$}{phi}}

For each vertex \(X \subseteq T\) of \(K(2j+1,j)\), define
\[
  \phi(X) \;=\; X \;\cup\; P_X
\]
where \(P_X\subset U\) is a carefully selected \((k+1 - j)\)-element subset of \(U\). The subsets \(P_X\) are chosen so that:
\begin{itemize}
    \item If \(X\cap Y = \varnothing\) in \(T\), then \(P_X\cap P_Y=\varnothing\). This ensures \(\phi(X)\cap \phi(Y) = \varnothing\) in \(S\).
    \item If \(X\cap Y \neq \varnothing\), then \(\phi(X)\cap \phi(Y)\neq\varnothing\).
\end{itemize}

A systematic combinatorial assignment can be made by pairing elements in \(U\) and allocating these pairs to distinct subsets \(X\). Sufficient pairs exist because \(U\) has size at least \(2(k+1-j)\). Thus each \(j\)-subset \(X\) in \(T\) obtains a unique pattern \(P_X\subset U\).

\subsection{Adjacency Preservation and Injectivity}

\paragraph{Adjacency.}
Two vertices \(X\) and \(Y\) in \(K(2j+1,j)\) are adjacent if and only if \(X\cap Y = \varnothing\). Under \(\phi\), they map to \(\phi(X)=X\cup P_X\) and \(\phi(Y)=Y\cup P_Y\). These images are disjoint precisely when \(X\cap Y=\varnothing\) and \(P_X\cap P_Y=\varnothing\). By construction, this condition holds. Thus adjacency is preserved.

\paragraph{Injectivity.}
If \(X \neq Y\), then either they differ on at least one element of \(T\) or they differ in the chosen pattern \(P_X \neq P_Y\). Hence \(\phi(X)\neq \phi(Y)\), making \(\phi\) injective.

\bigskip

\noindent\textbf{Conclusion for Appendix C}:\;
The Kneser embedding lemma has been established, confirming that any \(\,(j\le k)\)-coloring can be lifted into \((k+1)\)-colorings in a larger Kneser graph. This injection is fundamental to the inductive strategy, allowing partial solutions at level \(j\) to be embedded at level \(k+1\) without losing adjacency or injectivity.

\section{Appendix D: Additional Class Details and Path Collapsing Arguments}
\label{appendix:class-details}

This appendix provides further technical explanations on how induced paths are collapsed, how vertex degrees are managed, and how high-degree vertices are reintroduced in Classes B and D. These details ensure that each reduction preserves or respects the bounds on \(\mathrm{mad}(G)\) and \(\mathrm{odd\text{-}girth}(G)\), while also showing that smaller graphs can be colored either by minimality (if they remain at parameter \(k+1\)) or by the inductive hypothesis (if they drop to some \(j\le k\)).

\subsection{Choice of the Parameter \texorpdfstring{$L$}{L}}

A small integer \(L\) is chosen so that any path of length \(L+1\) forms part of a forbidden configuration \(\mathrm{F4}\) or can be locally collapsed. Typically, \(L\) can be set to a value that covers all short subgraphs enumerated in the forbidden configurations from Appendix~\ref{appendix:forbidden}, ensuring that Classes A and B only deal with short induced paths.

\subsection{Path Collapsing Procedure in Detail}

Given a long induced path \(\;P=(v_0,v_1,\ldots,v_{\ell-1},v_\ell)\), the graph \(G_c\) is formed by:
\begin{enumerate}
    \item Removing the internal vertices \(v_1, v_2, \ldots, v_{\ell-1}\),
    \item Adding (if necessary) the edge \(v_0 v_\ell\).
\end{enumerate}
One verifies that:
\[
\mathrm{mad}(G_c)\;\le\;\mathrm{mad}(G),
\quad
\mathrm{odd\text{-}girth}(G_c)\;\ge\;2k+1
\]
by noting that vertex removal decreases \(\mathrm{mad}\) or leaves it unchanged, and adding at most one edge cannot create an odd cycle of length less than \(2k+1\) in a graph that was already constrained to have \(\mathrm{odd\text{-}girth}\ge2k+1\).


\begin{figure}[ht]
    \centering
    \includegraphics[width=0.9\textwidth]{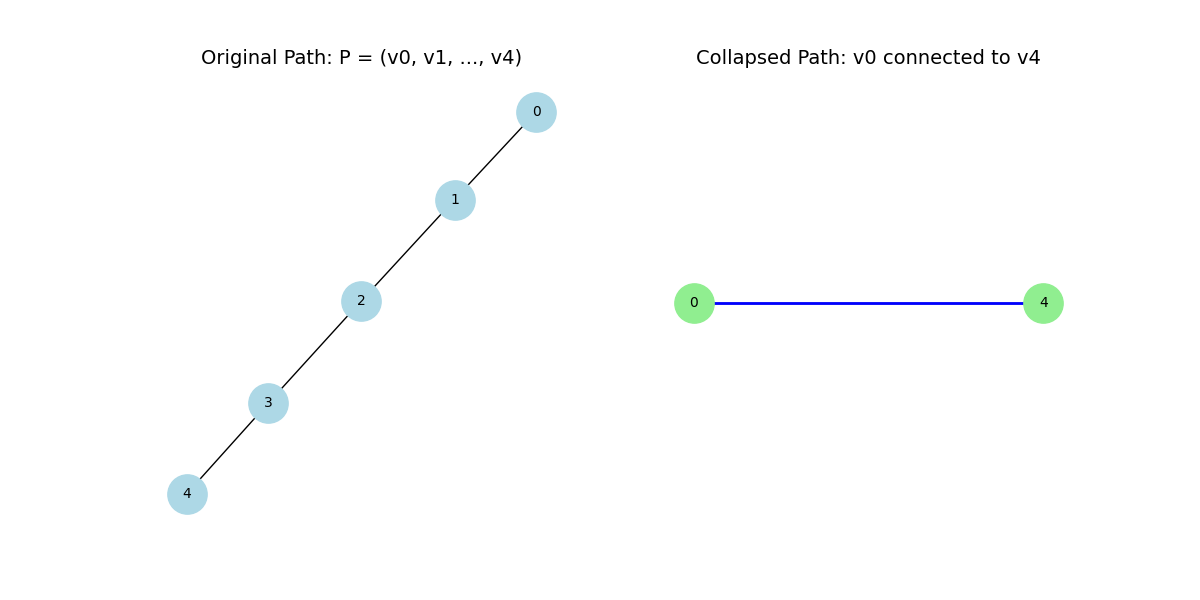}
    \caption{Schematic of the path-collapsing procedure for an induced path \(P=(v_0,v_1,\ldots,v_{\ell-1},v_{\ell})\). Internal vertices are removed, and the edge \(v_0v_{\ell}\) is optionally added to maintain connectivity.}
    \label{fig:path-collapse}
\end{figure}

\subsection{Re-Expansion in the Kneser Graph}

When \(G_c\) is colorable at level \(k+1\), the removed vertices \(v_1,\ldots,v_{\ell-1}\) are reinserted into the Kneser target \(K(2k+3,k+1)\) by introducing a chain of \((k+1)\)-subsets between the colors of \(v_0\) and \(v_\ell\). The combinatorial richness of \((k+1)\)-subsets of a \((2k+3)\)-element set ensures that each intermediate vertex can be placed with disjoint subsets, preserving adjacency and yielding the same induced path length.

\subsection{Handling High-Degree Vertices}

Classes~B and~D require a vertex \(w\) with \(\deg(w)\ge4\). One may remove \(w\) entirely, color the smaller graph (by minimality or the inductive hypothesis), then lift the coloring (via Appendix~\ref{appendix:kneser-embed} if necessary), and reintroduce \(w\) by choosing a \((k+1)\)-subset disjoint from its neighbors’ images in \(K(2k+3,k+1)\). This step is guaranteed because \((2k+3)\)-subsets have enough freedom to avoid intersection with a finite number of existing subsets.

\subsection{Verification Examples}

\begin{example}[Combining Path Collapsing and Vertex Removal]
Consider a Class~D graph \(G\) with a long induced path \(P\) and a vertex \(w\) of degree~5:
\begin{enumerate}
\item The path \(P\) is collapsed, yielding \(G_c\).
\item The vertex \(w\) (if present in \(G_c\)) is then removed, producing \(G'\).
\item Either \(G'\) meets the \((k+1)\)-criteria (hence colorable by minimality) or it falls under a smaller parameter \(j\le k\) (colorable by induction).
\item The coloring is lifted if necessary (Appendix~\ref{appendix:kneser-embed}).
\item The vertex \(w\) is reintroduced, choosing a \((k+1)\)-subset in the Kneser graph disjoint from all neighbors’ subsets.
\item Finally, the collapsed path \(P\) is re-expanded in \(K(2k+3,k+1)\).
\end{enumerate}
Thus, the entire graph \(G\) is colored into \(K(2k+3,k+1)\).
\end{example}

\begin{example}[Odd-Girth Preservation]
If \(\mathrm{odd\text{-}girth}(G)\ge2k+1\) was initially satisfied, then a single edge addition (in step 2 of path collapsing) does not create an odd cycle shorter than \(2k+1\). Any newly formed cycle that includes the new edge \(v_0v_\ell\) would otherwise contradict \(\mathrm{odd\text{-}girth}(G)\). Therefore, short odd cycles cannot emerge from this local transformation.
\end{example}

\bigskip

\noindent
\textbf{Conclusion for Appendix D}:\;
All technical details needed to justify the reductions in Classes A--D are provided. Path collapsing and vertex removal do not break the key constraints, and the re-expansion of paths plus reintroduction of high-degree vertices are shown to be valid in the Kneser graph setting. These arguments fill any potential gaps in the main text regarding how smaller graphs are constructed and then reassembled while preserving homomorphisms into \(K(2k+3,k+1)\).

\section{References}

\end{document}